\numberwithin{equation}{section}
\newtheorem{theorem}{Theorem}[section]
\newtheorem{lemma}[theorem]{Lemma}
\newtheorem{prop}[theorem]{Proposition}
\newtheorem{corollary}[theorem]{Corollary}
\theoremstyle{definition}
\newtheorem{definition}[theorem]{Definition}
\theoremstyle{remark}
\newtheorem{example}[theorem]{Example}
\newtheorem{remark}[theorem]{Remark}
\newtheorem*{ackn}{Acknowledgments}
\DeclareMathOperator{\Hom}{Hom}
\DeclareMathOperator{\Ker}{Ker}
\DeclareMathOperator{\Der}{Der}
\newcommand*{\alg}{\mathrm{alg}}
\newcommand*{\smth}{\mathrm{smth}}
\newcommand*{\hol}{\mathrm{hol}}
\newcommand*{\env}{\mathrm{e}}
\newcommand*{\dL}{\mathrm{L}}
\newcommand*{\lmod}{\mbox{-}\!\mathop{\mathsf{mod}}}
\newcommand*{\rmod}{\mathop{\mathsf{mod}}\!\mbox{-}}
\newcommand*{\bimod}{\mbox{-}\!\mathop{\mathsf{mod}}\!\mbox{-}}
\newcommand*{\lalg}{\mbox{-}\!\mathop{\mathsf{alg}}}
\newcommand*{\Ptens}{\mathop{\widehat\otimes}}
\newcommand*{\Ptensalg}{\Ptens\lalg}
\newcommand*{\Tens}{\mathop{\otimes}}
\newcommand*{\ptens}[1]{\mathop{\widehat\otimes}_{#1}}
\newcommand*{\Lptens}[1]{\mathop{\widehat\otimes}_{#1}\nolimits^{\dL}}
\renewcommand*{\dh}{\mathop{\mathrm{dh}}}
\newcommand*{\db}{\mathop{\mathrm{db}}}
\newcommand*{\id}{1}
\newcommand*{\CC}{\mathbb C}
\newcommand*{\N}{\mathbb N}
\newcommand*{\Z}{\mathbb Z}
\newcommand*{\DD}{\mathbb D}
\newcommand*{\TT}{\mathbb T}
\newcommand*{\cO}{\mathscr O}
\newcommand*{\cB}{\mathscr B}
\newcommand*{\cT}{\mathcal T}
\newcommand*{\cK}{\mathscr K}
\newcommand*{\bD}{\mathsf D}
\newcommand*{\h}{\mathbf h}
\newcommand*{\spn}{\mathrm{span}}
\newcommand*{\eps}{\varepsilon}
\newcommand*{\ol}{\overline}
\newenvironment{mycompactenum}{\pltopsep=5pt\begin{compactenum}[\upshape (i)]}%
{\end{compactenum}}
\newcommand*{\xra}{\xrightarrow}
\newcommand{\lriso}{\stackrel{\textstyle\sim}{\smash\longrightarrow%
\vphantom{\scriptscriptstyle{_1}}}}
\begin{document}
\title[Dense quasi-free subalgebras of the Toeplitz algebra]{Dense quasi-free
subalgebras\\ of the Toeplitz algebra}
\subjclass[2010]{46M18, 46H99, 46A45, 16E10}
\author{A. Yu. Pirkovskii}
\address{Alexei Yu. Pirkovskii, Faculty of Mathematics,
HSE University,
6 Usacheva, 119048 Moscow, Russia}
\email{aupirkovskii@hse.ru}
\thanks{This work was supported by the RFBR grant no. 19-01-00447.}
\date{}

\begin{abstract}
We introduce a family of dense subalgebras of the Toeplitz algebra and give
conditions under which our algebras are quasi-free. As a corollary, we show that the smooth
Toeplitz algebra introduced by Cuntz is quasi-free.
\end{abstract}

\maketitle

\section{Introduction}

Quasi-free algebras were introduced by Schelter \cite{Schelter} under the name of
``smooth algebras''. The main idea behind this notion is that, for a number of reasons,
quasi-free algebras can be viewed as noncommutative analogs of smooth affine varieties
(or, more exactly, as analogs of algebras of functions on smooth affine varieties
in the category of all associative algebras). This point of view was further developed
by Cuntz and Quillen \cite{CQ1}, who coined the name ``quasi-free'' for this class of algebras,
mostly because they behave like free algebras with respect to nilpotent extensions.
Cuntz and Quillen gave several useful characterizations of quasi-free algebras,
provided many examples, and proved a number of interesting properties of such algebras.
Quasi-free algebras and their generalizations were also considered in the functional
analytic context, both in the locally convex \cite{Cuntz_cycl,Pir_qfree}
and in the bornological settings
\cite{Meyer_thesis,Meyer_exentire,Meyer_locan,Voigt_per,Voigt_loc}.
They play an important role in cyclic homology theory
\cite{CQ2,CQ_ex,Cuntz_cycl,Meyer_thesis,Meyer_exentire,Meyer_locan,
Quillen_book,Voigt_per,Voigt_loc}
and in some other aspects of noncommutative geometry \cite{LeBruyn_book,Ginzburg,BZ}.

In this paper, we study quasi-freeness for some dense subalgebras of the
Toeplitz algebra, i.e., the universal $C^*$-algebra $\cT$ generated by an isometry.
Here we understand quasi-freeness in the setting of locally convex algebras,
but we believe that the bornological approach is also possible.
The starting point for our study is the fact (probably due to Meyer \cite{Meyer_locan})
that the algebraic Toeplitz algebra, i.e., the $*$-subalgebra $\cT_\alg$ of $\cT$
algebraically generated by the ``universal'' isometry $v\in\cT$, is quasi-free
(see Proposition~\ref{prop:Talg_qfree} below).
On the other hand, $\cT$ itself is not quasi-free. This is a simple corollary of a very general
result due to Aristov \cite{Ar}. In retrospective, this is not surprising at all, because
there are numerous results (going back to Helemskii's global dimension theorem \cite{X_dg_ne_1})
showing that, for Banach algebras, the property of being quasi-free is a rare phenomenon.

We will be interested in locally convex algebras continuously embedded in $\cT$
and containing $\cT_\alg$. The best known example of such an algebra is the smooth
Toeplitz algebra introduced by Cuntz \cite{Cuntz_biv_lok}.
Another natural example is the holomorphic Toeplitz algebra recently considered
by Panarin \cite{Panarin}. Our main goal is to construct a family $\{ \cT_{P,Q}\}$
of locally convex subalgebras of $\cT$ (where $P$ and $Q$ are K\"othe sets
satisfying some natural conditions, see Section~\ref{sect:TPQ} for details)
and to give a sufficient condition
for the inclusion $\cT_\alg\hookrightarrow\cT_{P,Q}$ to be a homological epimorphism. This will
imply the quasi-freeness of $\cT_{P,Q}$. As a corollary, we show that the smooth Toeplitz algebra
and the holomorphic Toeplitz algebra are quasi-free.

\begin{ackn}
The author thanks Oleg Aristov for the question of whether or not the smooth Toeplitz algebra
is quasi-free. This question was the main motivation for writing this paper.
\end{ackn}

\section{Preliminaries}
\subsection{Locally convex algebras and modules}
This subsection gives a brief account of homological algebra
in categories of locally convex modules.
Our main reference is \cite{X1}; some details can also be found in
\cite{X2,X_HOA,T1,EschmPut,Pir_qfree}.

Throughout, all vector spaces and algebras are assumed to be over the field $\CC$
of complex numbers. All algebras are assumed to be associative and unital.
By a {\em $\Ptens$-algebra} we mean an algebra $A$ endowed with
a complete locally convex topology in such a way that the multiplication
$A\times A\to A$ is jointly continuous.
Note that the multiplication uniquely extends to a continuous linear map
${A\Ptens A\to A},\; a\otimes b\mapsto ab$,
where the symbol $\Ptens$ stands for the completed projective
tensor product (whence the name ``$\Ptens$-algebra'').
If the topology on $A$ can be determined by a family of submultiplicative
seminorms (i.e., a family $\{\|\cdot\|_\lambda : \lambda\in\Lambda\}$ of
seminorms such that $\| ab\|_\lambda\le\| a\|_\lambda \| b\|_\lambda$
for all $a,b\in A$), then $A$ is said to be {\em locally $m$-convex}
(or an {\em Arens-Michael algebra}).
A {\em Fr\'echet algebra} is a $\Ptens$-algebra $A$ whose underlying locally convex
space is metrizable.
The category of all $\Ptens$-algebras and continuous algebra homomorphisms
will be denoted by $\Ptensalg$.

Let $A$ be a $\Ptens$-algebra.
A {\em left $A$-$\Ptens$-module} is a left $A$-module $M$ endowed with
a complete locally convex topology in such a way that
the action $A\times M\to M$ is jointly continuous.
We always assume that $1_A\cdot x=x$ for all $x\in M$, where $1_A$ is the identity of $A$.
Left $A$-$\Ptens$-modules and their continuous $A$-module morphisms form a category
denoted by $A\lmod$.
The categories $\rmod A$ and $A\bimod A$ of right
$A$-$\Ptens$-modules and of $A$-$\Ptens$-bimodules are defined similarly.
Note that $A\bimod A\cong A^\env\lmod\cong\rmod A^\env$,
where $A^\env=A\Ptens A^{\mathrm{op}}$,
and where $A^{\mathrm{op}}$ stands for the algebra opposite to $A$.
Given $M,N\in A\lmod$ (respectively, $\rmod A$, $A\bimod A$),
the space of morphisms from $M$ to $N$ will be denoted by ${_A}\h(M,N)$
(respectively, $\h_A(M,N)$, ${_A}\h_A(M,N)$).

If $M$ is a right $A$-$\Ptens$-module and $N$
is a left $A$-$\Ptens$-module, then their {\em $A$-module tensor product}
$M\ptens{A}N$ is defined to be
the completion of the quotient $(M\Ptens N)/L$, where $L\subset M\Ptens N$
is the closed linear span of all elements of the form
$x\cdot a\otimes y-x\otimes a\cdot y$
($x\in M$, $y\in N$, $a\in A$).
As in pure algebra, the $A$-module tensor product can be characterized
by the universal property that, for each complete locally convex space $E$,
there is a natural bijection between the set of all jointly
continuous $A$-balanced bilinear maps from $M\times N$ to $E$
and the set of all continuous linear maps from
$M\ptens{A}N$ to $E$.

A chain complex $C=(C_n,d_n)_{n\in\Z}$ in $A\lmod$ is {\em admissible} if
it is split exact in the category of topological vector spaces, i.e., if it has
a contracting homotopy consisting of continuous linear maps. Geometrically,
this means that each $d_n$ is an open map of $C_n$ onto $\Ker d_{n-1}$
(which implies, in particular, that $C$ is exact in the purely algebraic sense),
and that $\Ker d_n$ is a complemented subspace
of $C_n$ for each $n$.
The category $A\lmod$ together with the class of all short admissible sequences
is an exact category in the sense of Quillen \cite{Quillen}.
This implies that the derived categories $\bD(A\lmod)$, $\bD^\pm(A\lmod)$, and
$\bD^b(A\lmod)$ are defined (see \cite{Keller_DCU,Pir_qfree} for details).
The same is true of $\rmod A$ and $A\bimod A$.

A left $A$-$\Ptens$-module $P$ is {\em projective}
if the functor ${_A}\h(P,-)$ takes admissible sequences
of $A$-$\Ptens$-modules to exact sequences of vector spaces.
A {\em projective resolution} of $M\in A\lmod$ is a pair $(P,\eps)$,
where $P$ is a nonnegative chain complex consisting of projective $A$-$\Ptens$-modules
and $\eps$ is a morphism from $P_0$ to $M$ such that the sequence
$P\xra{\eps} M\to 0$ is an admissible complex.
The {\em length} of $P$ is the minimum integer $n$
such that $P_i=0$ for all $i>n$, or $\infty$ if there is no such $n$.
It is a standard fact that $A\lmod$ has {\em enough projectives},
i.e., each left $A$-$\Ptens$-module has a projective resolution.
The same is true of $\rmod A$ and $A\bimod A$.

The {\em projective homological dimension} of $M\in A\lmod$ is the
minimum integer $n=\dh_A M\in \Z_+\cup\{\infty\}$
(where $\Z_+$ is the set of all nonnegative integers)
with the property that $M$ has a projective resolution of length $n$.
The {\em bidimension} of $A$ is defined by
$\db A=\dh_{A^\env} A$.

Given a $\Ptens$-algebra $A$ and an $A$-$\Ptens$-bimodule $M$,
we let $\Der(A,M)$ denote the space of all continuous derivations of
$A$ with values in $M$. We will need the following standard
characterization of derivations. Let us equip $A\times M$ with a
$\Ptens$-algebra structure by letting $(a,m)(b,n)=(ab,an+mb)$
($a,b\in A$, $m,n\in M$). Let $p_1\colon A\times M\to A$ denote the
projection given by $(a,m)\mapsto a$. We then have a natural
isomorphism
\begin{equation}
\label{der=hom}
\Der(A,M)\cong \{ \varphi\in\Hom_{\Ptensalg}(A,A\times M) : p_1\varphi=\id_A\}.
\end{equation}
Explicitly, the above isomorphism takes each derivation $D\colon A\to M$ to the
homomorphism $A\to A\times M$ given by $a\mapsto (a,D(a))$ (see, e.g., \cite{Pir_qfree}).

If $A$ is a $\Ptens$-algebra, then the bimodule of {\em noncommutative differential
$1$-forms over $A$} is an $A$-$\Ptens$-bimodule $\Omega^1 A$ together with a
derivation $d_A\colon A\to\Omega^1 A$ such that for each $A$-$\Ptens$-bimodule $M$
and each derivation $D\colon A\to M$ there exists a unique $A$-$\Ptens$-bimodule
morphism $\Omega^1 A\to M$ making the following diagram commute:
\[
\xymatrix{
\Omega^1 A \ar[r] & M\\
A \ar[u]^{d_A} \ar[ur]_D
}
\]
In other words, we have a natural isomorphism
\[
{_A}\h_A(\Omega^1 A,M)\cong\Der(A,M) \qquad (M\in A\bimod A).
\]
It is a standard fact (see, e.g., \cite{CQ1,Pir_qfree}) that $\Omega^1 A$ exists
and is isomorphic to the kernel of the multiplication map
$\mu_A\colon A\Ptens A\to A$. Under the above identification, the universal
derivation $d_A\colon A\to \Omega^1 A$ acts by the rule
$d_A(a)=1\otimes a-a\otimes 1$ ($a\in A$). Thus we have an exact sequence
\begin{equation}
\label{Omega^1_can}
\xymatrix{
0 \ar[r] & \Omega^1 A \ar[r]^{j_A} & A\Ptens A \ar[r]^(.6){\mu_A}
& A \ar[r] & 0
}
\end{equation}
in $A\bimod A$, where $j_A$ is uniquely determined by $j_A(d_A(a))=1\otimes a-a\otimes 1$
($a\in A$). Note that \eqref{Omega^1_can} splits in $A\lmod$ and in $\rmod A$
(\cite{Pir_qfree}, cf. also \cite{CQ1}). In particular, \eqref{Omega^1_can} is admissible.

Let $B$ be a $\Ptens$-algebra. By an {\em extension} of $B$ we mean a continuous
open homomorphism $\sigma\colon A\to B$, where $A$ is a $\Ptens$-algebra.
It is convenient to interpret $(B,\sigma)$ as an exact sequence
\begin{equation}
\label{extension}
0 \to I \xra{i} A \xra{\sigma} B \to 0,
\end{equation}
where $I=\Ker\sigma$ and $i$ is the inclusion map. We say that \eqref{extension}
{\em splits} (respectively, that \eqref{extension} is {\em admissible})
if there exists a $\Ptens$-algebra homomorphism (respectively, a continuous linear map)
$j\colon B\to A$ such that $\sigma j=\id_B$. We say that \eqref{extension}
is a {\em square-zero} extension if $I^2=0$.

Following \cite{CQ1} (see also \cite{Cuntz_cycl,Quillen_book,Weibel,Witherspn}),
we say that a $\Ptens$-algebra $A$ is {\em quasi-free} if it satisfies
any (hence all) of the following equivalent conditions:

\begin{enumerate}
\item
Each admissible square-zero extension of $A$ splits.
\item
For each admissible square-zero extension $0\to I\to B\to C\to 0$ of $\Ptens$-algebras
and for each $\Ptens$-algebra homomorphism $A\to C$ there exists a $\Ptens$-algebra
homomorphism $A\to B$ making the following diagram commute:
\[
\xymatrix{
&&& A \ar@{-->}[dl] \ar[d]\\
0 \ar[r] & I \ar[r] & B \ar[r] & C \ar[r] & 0
}
\]
\item
$\Omega^1 A$ is projective in $A\bimod A$.
\item
$\db A\le 1$.
\end{enumerate}

The above list of equivalent conditions can be extended; see, e.g.,
\cite{CQ1,Cuntz_cycl,Quillen_book,Meyer_thesis}.

\begin{remark}
\label{rem:qfree_strongest}
Of course, the notion of a quasi-free algebra makes sense in the purely algebraic
case as well (i.e., in the case of algebras not equipped with a topology),
and this is exactly the case where they were introduced for the first time \cite{Schelter,CQ1}.
In this respect, let us note that each algebra $A$ of at most countable dimension
becomes a $\Ptens$-algebra under the strongest locally convex topology \cite[A.2.8]{BFGP},
and that $A$ is quasi-free
as a $\Ptens$-algebra if and only if $A$ is quasi-free in the purely algebraic sense.
This readily follows, for example, from \cite[Corollary 8.5]{Pir_wdgnucl}.
\end{remark}

By a {\em $\Ptens$-algebra epimorphism} we mean an epimorphism in the category
of all $\Ptens$-algebras, i.e., a $\Ptens$-algebra homomorphism $f\colon A\to B$
such that, whenever $C$ is a $\Ptens$-algebra and $g,h\colon B\to C$ are
$\Ptens$-algebra homomorphisms satisfying $gf=hf$, we have $g=h$.
Equivalently, $f$ is an epimorphism if and only if the canonical map
$B\ptens{A} B\to B$ induced by the multiplication on $B$ is an isomorphism
in $B\bimod B$ (the proof of this fact given in \cite[XI.1]{Stnstrm} for the category of
rings holds verbatim for $\Ptens$-algebras).
Following \cite{GL}, we say that $f$ is a {\em homological epimorphism}
if the canonical morphism $B\Lptens{A} B\to B$ induced by the
multiplication on $B$ is an isomorphism in $\bD^-(B\bimod B)$
(where $\Lptens{A}$ is the total left derived functor of $\ptens{A}$).
Explicitly, this means that for
some (or, equivalently, for each) projective resolution $P\to A$ of $A$ in $A\bimod A$
the complex $B\ptens{A} P\ptens{A} B\to B$ is admissible.
Homological epimorphisms were introduced by J.\,L.\,Taylor \cite{T2}
under the name of {\em absolute localizations}.
Since then, they were rediscovered several times under different names
(see \cite{Dicks,GL,NR,Meyer_emb,BBK}), both in the purely algebraic and in the functional analytic
contexts.

\begin{remark}
\label{rem:homepi_db}
For future reference, observe that, if $f\colon A\to B$ is a homological epimorphism, then we
obviously have $\db B\le\db A$. In particular, if $A$ is quasi-free, then so is $B$.
\end{remark}

\subsection{A survey of Toeplitz algebras}
Recall that the {\em Toeplitz algebra} is the universal
unital $C^*$-algebra $\cT$ generated by an isometry. This means that there is an
isometry $v\in\cT$ with the property that for each unital $C^*$-algebra $A$ and each
isometry $w\in A$ there exists a unique unital $*$-homomorphism
from $\cT$ to $A$ which takes $v$ to $w$. More concretely, $\cT$ can be defined
as the $C^*$-subalgebra of $\cB(\ell^2)$ generated by the right shift operator $v$.
The equivalence of the above definitions follows from Coburn's theorem
(see, e.g., \cite{Murphy} for details). Since $v^* v=1$, we have
\[
\cT=\ol{\spn}\{ v^k (v^*)^\ell : k,\ell\in\Z_+\}\subset\cB(\ell^2).
\]
The Toeplitz algebra can also be characterized in terms of Toeplitz operators as follows.
Given a continuous function $f$ on the circle $\TT=\{ z\in\CC : |z|=1\}$, let
$T_f$ denote the corresponding Toeplitz operator on the Hardy space $H^2=H^2(\TT)$.
We then have
\[
\cT=\{ T_f + K : f\in C(\TT),\; K\in\cK(H^2) \}.
\]
More exactly, the map
\begin{equation}
\label{CK=T}
C(\TT)\oplus\cK(H^2)\to\cT,\quad (f,K)\mapsto T_f+K,
\end{equation}
is a vector space isomorphism (see, e.g., \cite{Davidson,Murphy}).

The {\em algebraic Toeplitz algebra} \cite{Cuntz_cycl,CMR,Meyer_locan}
is the unital $*$-subalgebra $\cT_\alg$ of $\cT$
generated (as a $*$-algebra) by $v$. It can also be interpreted in terms
of Toeplitz operators as follows. Let $M_\infty$ denote the algebra of infinite complex
matrices $a=(a_{ij})_{i,j\in\Z_+}$ such that $a_{ij}=0$ for all but finitely many $i,j\in\Z_+$.
We identify $M_\infty$ with a subalgebra of $\cK(H^2)$ by associating to each $a\in M_\infty$
the operator on $H^2$ whose matrix w.r.t. the trigonometric basis $\{ z^k : k\in\Z_+\}$
is $a$. Then
\[
\cT_\alg=\{ T_f + K : f\in \CC[z,z^{-1}],\; K\in M_\infty \},
\]
where the algebra $\CC[z,z^{-1}]$ of Laurent polynomials is interpreted as the algebra
of trigonometric polynomials on $\TT$.

Recall also \cite{CMR} that $\cT_{\alg}$ is isomorphic to the unital
algebra generated by two elements $u,v$ with relation $uv=1$.
It is well known and easy to show that the elements $v^i u^j$ ($i,j\in\Z_+$)
form a basis of $\cT_\alg$.
Since the dimension of $\cT_\alg$ is countable, we may and will consider $\cT_\alg$
as a $\Ptens$-algebra with respect to the strongest locally convex topology
(cf. Remark~\ref{rem:qfree_strongest}).

As was mentioned in the Introduction, our main objects are
locally convex algebras sitting in between $\cT_\alg$ and $\cT$.
The best known example of such an algebra is the {\em smooth Toeplitz algebra} $\cT_\smth$
introduced by Cuntz \cite{Cuntz_biv_lok} (see also \cite{Cuntz_cycl,CMR,Meyer_locan,Khalkhali}).
To define $\cT_\smth$, consider the algebra
\[
\cK_\smth=\Bigl\{ a=(a_{ij})_{i,j\in\Z_+} : a_{ij}\in\CC,\;
\| a\|_n=\sum_{i,j} |a_{ij}|(1+i+j)^n<\infty\;\forall n\in\Z_+\Bigr\}
\]
of {\em smooth compact operators} introduced by Phillips \cite{Phillips_K_Fre}.
Recall that $\cK_\smth$ is an algebra under the usual matrix multiplication and is a Fr\'echet
algebra for the topology generated by the norms $\|\cdot\|_n$ ($n\in\Z_+$).
Now $\cT_\smth\subset\cT$ is defined as follows:
\[
\cT_\smth=\{ T_f + K : f\in C^\infty(\TT),\; K\in\cK_\smth \}.
\]
The restriction of \eqref{CK=T} to $C^\infty(\TT)\oplus \cK_\smth$
is a vector space isomorphism between $C^\infty(\TT)\oplus \cK_\smth$ and $\cT_\smth$.
This enables us to topologize $\cT_\smth$ so that it becomes a Fr\'echet space.
Moreover \cite{Cuntz_biv_lok}, $\cT_\smth$ is a Fr\'echet-Arens-Michael algebra.

Another natural example is the {\em holomorphic Toeplitz algebra} $\cT_\hol$ recently introduced
by Panarin \cite{Panarin}. The definition of $\cT_\hol$ is similar to that of $\cT_\smth$.
Namely, we consider the algebra
\[
\cK_\hol=\Bigl\{ a=(a_{ij})_{i,j\in\Z_+} : a_{ij}\in\CC,\;
\| a\|_n=\sum_{i,j} |a_{ij}| n^{i+j}<\infty\;\forall n\in\Z_+\Bigr\}
\]
of {\em holomorphic compact operators}, and define $\cT_\hol$ as follows:
\[
\cT_\hol=\{ T_f + K : f\in \cO(\CC^\times),\; K\in\cK_\hol \}.
\]
Here $\cO(\CC^\times)$ denotes the algebra of holomorphic functions
on $\CC^\times=\CC\setminus\{ 0\}$.
Similarly to $\cT_\smth$, $\cT_\hol$ becomes a Fr\'echet-Arens-Michael algebra if we identify
the underlying vector space of $\cT_\hol$ with $\cO(\CC^\times)\oplus \cK_\hol$
via the isomorphism $(f,K)\mapsto T_f+K$.

The following result is probably due to Meyer
(see \cite{Meyer_locan}, where a much more general result is proved).
Since this is the main motivation for the present paper, we give a proof here
for the reader's convenience. This elementary proof is due to Aristov
(private communication).

\begin{prop}
\label{prop:Talg_qfree}
$\cT_\alg$ is quasi-free.
\end{prop}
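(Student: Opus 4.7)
The plan is to verify condition (1) of the quasi-freeness definition recalled in the excerpt: every admissible square-zero extension of $\cT_\alg$ splits in $\Ptensalg$. The key ingredients are (a) the presentation $\cT_\alg = \langle u, v \mid uv = 1\rangle$ as a universal unital algebra, recalled in the preceding paragraph, and (b) the observation of Remark~\ref{rem:qfree_strongest} that, since $\cT_\alg$ carries the strongest locally convex topology, every algebra homomorphism out of $\cT_\alg$ is automatically continuous. Together, (a) and (b) reduce the question to a purely algebraic lifting problem: given a square-zero extension, it suffices to specify compatible images of $u$ and $v$ in $B$ satisfying $UV = 1_B$.

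Concretely, starting from an admissible square-zero extension
\[
0 \to I \xra{i} B \xra{\sigma} \cT_\alg \to 0,
\]
I would pick any $U_0, V_0 \in B$ with $\sigma(U_0) = u$ and $\sigma(V_0) = v$ (such preimages exist by surjectivity of $\sigma$; admissibility is not even needed here). Setting $k := U_0 V_0 - 1_B$, we have $\sigma(k) = uv - 1 = 0$, so $k \in I$. I then correct by putting $U := U_0$ and $V := V_0(1 - k)$. Using $k^2 \in I^2 = 0$, a direct computation gives
\[
UV \;=\; U_0 V_0 (1 - k) \;=\; (1 + k)(1 - k) \;=\; 1 - k^2 \;=\; 1_B,
\]
while $\sigma(U) = u$ and $\sigma(V) = v$ are immediate from $k \in \Ker\sigma$. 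The universal property of $\cT_\alg$ now furnishes a unique unital algebra homomorphism $\varphi \colon \cT_\alg \to B$ with $\varphi(u) = U$ and $\varphi(v) = V$; by (b) it is automatically continuous, and $\sigma\varphi = \id_{\cT_\alg}$ holds on generators, hence everywhere. Thus $\varphi$ is the required splitting.

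I do not anticipate any real obstacle: the entire argument is a two-line manipulation that works because $\cT_\alg$ has a single defining relation and because $I^2 = 0$ truncates the formal inverse $(1+k)^{-1}$ to $1-k$. The only conceptual point is the appeal to Remark~\ref{rem:qfree_strongest} to lift the algebraic splitting back to the topological setting of $\Ptensalg$; this is what allows us to bypass, for this particular algebra, any genuinely homological argument.
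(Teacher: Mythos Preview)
Your proposal is correct and essentially identical to the paper's proof: both pick preimages of $u,v$, observe that their product is $1+k$ with $k\in I$, correct the preimage of $v$ by multiplying by $1-k$, and invoke the universal property of $\cT_\alg$ together with Remark~\ref{rem:qfree_strongest}. The only cosmetic difference is that the paper phrases the reduction as ``it suffices to show $\cT_\alg$ is quasi-free in the purely algebraic sense'' rather than your ``every algebra homomorphism out of $\cT_\alg$ is automatically continuous,'' but these amount to the same observation.
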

\begin{proof}
By Remark \ref{rem:qfree_strongest}, it suffices to show that $\cT_\alg$ is quasi-free
in the purely algebraic sense. Let
\begin{equation}
\label{ext_Talg}
0\to I \to R\xra{p}\cT_\alg\to 0
\end{equation}
be a square-zero extension of $\cT_\alg$.
Choose $a,b\in R$ such that $p(a)=u$ and $p(b)=v$.
Then $ab=1+c$ for some $c\in I$. Since $I^2=0$,
we have $ab(1-c)=1-c^2=1$.
Letting $b'=b(1-c)$, we see that $ab'=1$.
By the universal property of $\cT_\alg$, there is a unique homomorphism
$j\colon\cT_\alg\to R$ such that
$j(u)=a$ and $j(v)=b'$.
Since $p(a)=u$ and $p(b')=p(b-bc)=p(b)=v$, we have $pj=\id_{\cT_\alg}$.
Thus \eqref{ext_Talg} splits, which completes the proof.
\end{proof}

\section{A family of locally convex Toeplitz algebras}
\label{sect:TPQ}
In this section we construct a family $\{\cT_{P,Q}\}$ of locally convex subalgebras
of $\cT$, where $P$ and $Q$ are K\"othe sets satisfying some natural conditions.
We will also show that $\cT_\alg$, $\cT_\smth$, and $\cT_\hol$ are special cases
of our construction.

Let $I$ be any set, and let $P$ be a set of nonnegative real-valued functions on $I$.
For $p\in P$ and $i\in I$, we write $p_i$ for $p(i)$.
Recall that $P$ is a {\em K\"othe set} on $I$ if the following axioms are
satisfied (see, e.g., \cite{Pietsch}):
\begin{align*}
\tag*{(P1)}
&\forall\,i\in I\quad\exists\, p\in P\quad p_i>0\, ;\\
\tag*{(P2)}
&\forall\, p,q\in I\quad\exists\, r\in P\quad\max\{ p_i,q_i\}\le r_i\quad (i\in I).
\end{align*}
Given a K\"othe set $P$, the {\em K\"othe space}
$\lambda(P)=\lambda(I,P)$ is defined as follows [loc. cit.]:
\begin{equation*}
\lambda(P)=
\Bigl\{ x=(x_i)\in \CC^I :
\| x\|_p=\sum_i |x_i|p_i <\infty\quad\forall\, p\in P\Bigr\}\, .
\end{equation*}
This is a complete locally convex space
with the topology determined by
the family $\{\|\cdot\|_p : p\in P\}$ of seminorms. Clearly, $\lambda(P)$
is a Fr\'echet space if and only if $P$ contains an at most countable
cofinal subset.

For each $i\in I$ denote by $e_i$ the function on $I$ which is $1$ at $i$ and
$0$ elsewhere. Obviously, $x=\sum_i x_i e_i$ for each $x\in\lambda(P)$.

Given K\"othe sets $P\subset [0,+\infty)^I$ and $Q\subset [0,+\infty)^J$, let
$P\times Q$ denote the K\"othe set on $I\times J$ consisting of all functions
of the form $(i,j)\mapsto p_i q_j$ ($p\in P$, $q\in Q$).
By \cite{Pietsch2}, there exists a topological isomorphism
\begin{equation}
\label{Kothe_Ptens}
\lambda(P)\Ptens\lambda(Q)\lriso\lambda(P\times Q),\qquad
e_i\otimes e_j\mapsto e_{(i,j)}.
\end{equation}
Moreover, if we identify $\lambda(P)\Ptens\lambda(Q)$ with $\lambda(P\times Q)$
via \eqref{Kothe_Ptens}, then for each $p\in P$ and $q\in Q$
the seminorm $\|\cdot\|_{(p,q)}$ on $\lambda(P\times Q)$
is identified with the projective tensor seminorm
$\|\cdot\|_p\otimes_\pi\|\cdot\|_q$ on $\lambda(P)\Ptens\lambda(Q)$.

From now on, we concentrate on K\"othe sets on $\Z_+=\{ 0,1,2,\ldots\}$.
As a first step towards constructing the locally convex Toeplitz algebras $\cT_{P,Q}$,
we define auxiliary power series algebras, which will play the role of
``building blocks'' for $\cT_{P,Q}$. More exactly, we would like to make
$\lambda(P)$ into an algebra under convolution. This requires
one more condition on the K\"othe set $P$.

\begin{definition}
Let $P$ be a K\"othe set on $\Z_+$. We say that $P$ is a {\em weighted set}
if for each $p\in P$ we have $p_0=1$,
and if for each $p\in P$ there exist $p'\in P$ and $C>0$ such that
\[
p_{i+j}\le Cp'_i p'_j \qquad (i,j\in\Z_+).
\]
If the above property holds with $p'=p$ and $C=1$, then we say that
$P$ is an {\em $m$-weighted set}.
\end{definition}

\begin{prop}
For each weighted set $P$ on $\Z_+$
there exists a unique multiplication on $\lambda(P)$ satisfying $e_{i+j}=e_i e_j$ ($i,j\in\Z_+$)
and making $\lambda(P)$ into a $\Ptens$-algebra.
If, moreover, $P$ is an $m$-weighted set, then each seminorm
$\|\cdot\|_p$ is submultiplicative, and so $\lambda(P)$ is an Arens-Michael algebra.
\end{prop}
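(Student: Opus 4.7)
The plan is to construct the multiplication as a continuous linear map from $\lambda(P)\Ptens\lambda(P)$ to $\lambda(P)$, using the identification \eqref{Kothe_Ptens}. Uniqueness is immediate: the vectors $e_i$ have dense linear span in $\lambda(P)$ (they span the subspace of finitely supported sequences, which is dense by definition of the seminorms), so any jointly continuous multiplication satisfying $e_i e_j=e_{i+j}$ is determined on a dense subset and hence on all of $\lambda(P)$.

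For existence, I would define $\mu\colon\lambda(P\times P)\to\lambda(P)$ on the algebraic span of the $e_{(i,j)}$ by $\mu(e_{(i,j)})=e_{i+j}$ and check that it is continuous. Given $p\in P$, pick $p'\in P$ and $C>0$ with $p_{i+j}\le C p'_i p'_j$. For a finitely supported $x=\sum x_{(i,j)}e_{(i,j)}\in\lambda(P\times P)$ we get
\[
\|\mu(x)\|_p=\Bigl\|\sum_k\Bigl(\sum_{i+j=k}x_{(i,j)}\Bigr)e_k\Bigr\|_p
\le\sum_{i,j}|x_{(i,j)}|\,p_{i+j}\le C\sum_{i,j}|x_{(i,j)}|\,p'_i p'_j=C\|x\|_{(p',p')}.
\]
Hence $\mu$ extends uniquely to a continuous linear map $\lambda(P\times P)\to\lambda(P)$, and via \eqref{Kothe_Ptens} to a continuous linear map $\mu\colon\lambda(P)\Ptens\lambda(P)\to\lambda(P)$. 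The associated bilinear map $\lambda(P)\times\lambda(P)\to\lambda(P)$ is jointly continuous, and I take it as the multiplication on $\lambda(P)$.

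It remains to verify the algebra axioms. Associativity holds on the dense subalgebra spanned by $\{e_i\}$ because $(e_ie_j)e_k=e_{i+j+k}=e_i(e_je_k)$, and then extends to $\lambda(P)$ by joint continuity. The element $e_0$ is a two-sided identity, using $e_0e_j=e_{0+j}=e_j$ together with $(e_0)_0=1$ (guaranteed by the definition of a weighted set, which ensures $e_0\in\lambda(P)$). Thus $\lambda(P)$ is a $\Ptens$-algebra.

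For the $m$-weighted case, the displayed estimate applies with $p'=p$ and $C=1$, giving $\|\mu(x)\|_p\le\|x\|_{(p,p)}$. For $x,y\in\lambda(P)$ the projective tensor seminorm satisfies $\|x\otimes y\|_{(p,p)}=\|x\|_p\|y\|_p$, so $\|xy\|_p=\|\mu(x\otimes y)\|_p\le\|x\|_p\|y\|_p$, showing each $\|\cdot\|_p$ is submultiplicative and hence $\lambda(P)$ is Arens--Michael. The only non-routine step is the continuity estimate for $\mu$, which is where the weighted condition enters in an essential way; everything else is formal extension by density.
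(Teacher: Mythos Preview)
Your proof is correct and is exactly the kind of argument the paper has in mind; the paper itself omits the proof entirely, stating only that it ``is straightforward and is therefore omitted''. One tiny remark: in the last paragraph you only need the inequality $\|x\otimes y\|_{(p,p)}\le\|x\|_p\|y\|_p$, which is immediate from the definition of the projective tensor seminorm, so you need not claim equality there.
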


The proof is straightforward and is therefore omitted. Observe also that $\lambda(P)$ is unital
and that $e_0$ is the identity of $\lambda(P)$.

For our purposes, it will be convenient to let $z=e_1$ and to
denote the algebra $\lambda(P)$ with the above
multiplication by $\lambda(z,P)$ (so that $z$ plays the role of a ``formal variable''). Thus we have
\begin{equation*}
\lambda(z,P)=
\Bigl\{ a=\sum_{i=0}^\infty a_i z^i : a_i\in\CC,\;
\| a\|_p=\sum_i |a_i|p_i <\infty\quad\forall\, p\in P\Bigr\}\, .
\end{equation*}
Obviously, we have a chain of algebra embeddings $\CC[z]\subset\lambda(z,P)\subset\CC[[z]]$,
and both embeddings are dense.

\begin{example}
Let $P=\{ p^{(1)},\, p^{(2)},\ldots\}$, where $p^{(k)}=(1,\ldots ,1,0,0,\ldots)$
(with $1$ repeated $k$ times). Clearly, $P$ is an $m$-weighted set, and we have
$\lambda(z,P)=\CC[[z]]$, both algebraically and topologically.
\end{example}

\begin{example}
\label{example:C^inf_power}
Let $P=\{ p^{(1)},\, p^{(2)},\ldots\}$, where $p^{(k)}_n=(1+n)^k$ ($n\in\Z_+$).
It is easily seen that $P$ is an $m$-weighted set, and that $\lambda(z,P)$ is topologically
isomorphic (via the Fourier transform) to the subalgebra of $C^\infty(\TT)$
consisting of those smooth functions whose negative Fourier coefficients vanish.
\end{example}

\begin{example}
\label{example:O_power}
Let $R\in (0,+\infty]$, and let $P=\{ p^{(r)} : 0<r<R\}$, where $p^{(r)}_n=r^n$ for all $n\in\Z_+$.
Obviously, $P$ is an $m$-weighted set, and $\lambda(z,P)$ is nothing but the algebra
$\cO(\DD_R)$ of holomorphic functions on the open disk $\DD_R=\{ z\in\CC : |z|<R\}$.
\end{example}

\begin{example}
\label{example:poly_power}
Let $P$ denote the collection of all positive sequences $p\in (0,+\infty)^{\Z_+}$ satisfying $p_0=1$.
Given $p\in P$, we can easily construct $p'\in P$
such that
\begin{equation}
\label{pij}
p_{i+j}\le p'_i p'_j
\end{equation}
for all $i,j\in\Z_+$. Indeed, let $p'_0=1$, and assume that
we have already constructed $p'_0,\ldots ,p'_k$ satisfying \eqref{pij} whenever $i,j\le k$.
Clearly, there exists $p'_{k+1}>0$ such that $p_{i+k+1}\le p'_i p'_{k+1}$ for all $i\le k+1$.
Hence \eqref{pij} holds for all $i,j\le k+1$, and the induction argument completes the proof.
Thus $P$ is a weighted set. We clearly have $\lambda(z,P)=\CC[z]$, and the topology
on $\lambda(z,P)$ determined by $P$ is the strongest locally convex topology.
It is well known that $\CC[z]$ is not locally $m$-convex, and so $P$ is not
an $m$-weighted set.
\end{example}

Now we are ready to define our family $\{\cT_{P,Q}\}$ of locally convex Toeplitz algebras.
Let us identify the underlying
vector space of $\cT_\alg$ with $\CC[v]\Tens\CC[u]$ by sending each monomial
$v^i u^j$ to $v^i\otimes u^j$ ($i,j\in\Z_+$).
Given weighted sets $P$ and $Q$, we let $\cT_{P,Q}=\lambda(v,Q)\Ptens\lambda(u,P)$.
Clearly, $\cT_\alg$ is a dense vector subspace of $\cT_{P,Q}$.
By using \eqref{Kothe_Ptens}, we see that
\[
\cT_{P,Q}=\Bigl\{ a=\sum_{i,j\in\Z_+} c_{ij} v^i u^j :
c_{ij}\in\CC,\; \| a\|_{q,p}=\sum_{i,j\in\Z_+} |c_{ij}|q_i p_j<\infty
\;\forall q\in Q,\; \forall p\in P\Bigr\}.
\]
In order to make $\cT_{P,Q}$ into a $\Ptens$-algebra containing $\cT_\alg$ as a subalgebra,
we need to impose one more condition on the K\"othe sets $P$ and $Q$.

\begin{definition}
Let $P$ be a weighted set. We say that $P$ is {\em monotone} if for each $p\in P$
and each $i\in\Z_+$ we have $p_i\le p_{i+1}$.
\end{definition}

\begin{prop}
For each pair $P,Q$ of monotone weighted sets on $\Z_+$, there exists a unique jointly continuous
multiplication on $\cT_{P,Q}$ that extends the multiplication on $\cT_\alg$.
If, moreover, $P$ and $Q$ are $m$-weighted sets,
then $\cT_{P,Q}$ is an Arens-Michael algebra.
\end{prop}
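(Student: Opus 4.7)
The plan is to define the multiplication on the monomial basis via the known product in $\cT_\alg$ (which, thanks to the relation $uv=1$, returns a single monomial), to verify a joint continuity estimate on $\cT_\alg$ with respect to the seminorms of $\cT_{P,Q}$, and then to extend by density. Uniqueness is automatic: any jointly continuous extension of the $\cT_\alg$--multiplication must agree with it on the dense subalgebra $\cT_\alg$.

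Iterating $uv=1$ inside $\cT_\alg$ gives $u^j v^k=v^{k-j}$ if $k\ge j$ and $u^j v^k=u^{j-k}$ if $j>k$; hence
\[
(v^i u^j)(v^k u^l)=
\begin{cases}
v^{i+k-j}u^l & \text{if } k\ge j,\\
v^i u^{j-k+l} & \text{if } j>k.
\end{cases}
\]
In particular the product of two basis elements is again a single basis element, with $v$-exponent at most $i+k$ and $u$-exponent at most $j+l$.

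The central step is the monomial estimate. Fix $q\in Q$ and $p\in P$. Using the weighted condition, pick $q'\in Q$, $p'\in P$ and $C>0$ with $q_{m+n}\le Cq'_m q'_n$ and $p_{m+n}\le Cp'_m p'_n$ for all $m,n$; by (P2) I may enlarge $q'$ to some $\tilde q\in Q$ that also dominates $q$, and similarly $p'$ to some $\tilde p\in P$ dominating $p$. The normalisation $\tilde q_0=\tilde p_0=1$ (built into the weighted axiom) together with monotonicity forces $\tilde q_n,\tilde p_n\ge 1$ for every $n$. Applying monotonicity to absorb the subtracted subscripts ($q_{i+k-j}\le q_{i+k}$, $p_{j-k+l}\le p_{j+l}$) and then the weighted inequality, both branches of the reduction yield
\[
\|(v^i u^j)(v^k u^l)\|_{q,p}\le C\,\tilde q_i\tilde q_k\tilde p_j\tilde p_l,
\]
where the ``missing'' factor in each branch ($\tilde p_j$ in the first case, $\tilde q_k$ in the second) is inserted for free because it is $\ge 1$. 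Summing $|c_{ij}d_{kl}|$ against this bound gives $\|ab\|_{q,p}\le C\|a\|_{\tilde q,\tilde p}\|b\|_{\tilde q,\tilde p}$ for all $a,b\in\cT_\alg$, so the algebraic multiplication is jointly continuous for the topology induced from $\cT_{P,Q}$. By the universal property of $\Ptens$ together with the isomorphism \eqref{Kothe_Ptens}, it extends uniquely to a continuous linear map $\cT_{P,Q}\Ptens\cT_{P,Q}\to\cT_{P,Q}$, and associativity descends from $\cT_\alg$ by continuity.

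When $P$ and $Q$ are $m$-weighted, one may take $q'=q$, $p'=p$, $C=1$ throughout, and the monomial estimate specialises to $\|(v^iu^j)(v^ku^l)\|_{q,p}\le q_iq_k p_jp_l=\|v^iu^j\|_{q,p}\|v^ku^l\|_{q,p}$. Summing yields $\|ab\|_{q,p}\le\|a\|_{q,p}\|b\|_{q,p}$, so each defining seminorm is submultiplicative and $\cT_{P,Q}$ is Arens-Michael. The one delicate point in the whole argument is the case split in the reduction of $u^jv^k$: the resulting monomial is lopsided (one of the two exponent slots is empty), and monotonicity of the K\"othe sets is precisely what compensates, by allowing a factor $\ge 1$ to be inserted in the empty slot.
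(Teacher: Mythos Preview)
Your argument is correct and follows essentially the same route as the paper's proof: compute the monomial product via $uv=1$, use monotonicity to replace $q_{i+k-j}$ by $q_{i+k}$ (resp.\ $p_{j-k+l}$ by $p_{j+l}$), apply the weighted inequality, insert the missing factor using $\tilde q_n,\tilde p_n\ge 1$, sum over monomials, and extend by density. The only cosmetic difference is that the paper absorbs your enlargement step by saying ``without loss of generality $p\le p'$, $q\le q'$'' rather than invoking (P2) explicitly, and it leaves the $\ge 1$ bound implicit.
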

\begin{proof}
Given $p\in P$ and $q\in Q$, choose $p'\in P$, $q'\in Q$, and $C>0$
such that for each $i,j\in\Z_+$ we have
\[
p_{i+j}\le C p'_i p'_j,\qquad q_{i+j}\le C q'_i q'_j.
\]
Without loss of generality, we may assume that $p\le p'$ and $q\le q'$.
For each pair $v^i u^j$ and $v^k u^\ell$ of monomials in $\cT_\alg$, we clearly have
\begin{equation}
\label{toepl_bas_rel}
(v^i u^j)(v^k u^\ell)=
\begin{cases}
v^i u^{j-k+\ell} & \text{if $j\ge k$;}\\
v^{i+k-j} u^\ell & \text{if $j\le k$.}
\end{cases}
\end{equation}
If $j\ge k$, then
\[
\begin{split}
\| (v^i u^j)(v^k u^\ell)\|_{q,p}
&=\| v^i u^{j-k+\ell} \|_{q,p}
=q_i p_{j-k+\ell}\\
&\le q_i p_{j+\ell}
\le C q'_i q'_k p'_j p'_\ell
= C \| v^i u^j\|_{q',p'} \| v^k u^\ell\|_{q',p'}.
\end{split}
\]
On the other hand, if $j\le k$, then
\[
\begin{split}
\| (v^i u^j)(v^k u^\ell)\|_{q,p}
&=\| v^{i+k-j} u^\ell \|_{q,p}
=q_{i+k-j} p_\ell\\
&\le q_{i+k} p_\ell
\le C q'_i q'_k p'_j p'_\ell
= C \| v^i u^j\|_{q',p'} \| v^k u^\ell\|_{q',p'}.
\end{split}
\]
Thus for each pair of monomials $a,b\in\cT_\alg$ we have
\begin{equation}
\label{mult_cont_toepl}
\| ab\|_{q,p} \le C \| a\|_{q',p'} \| b\|_{q',p'}.
\end{equation}
Observe now that, if we take any $a\in\cT_\alg$ and decompose it as a finite sum $a=\sum_i a_i$
of linearly independent monomials $a_i$, then we have
\begin{equation}
\label{norm_sum_monom}
\| a\|_{q,p}=\sum_i \| a_i\|_{q,p}.
\end{equation}
This implies that \eqref{mult_cont_toepl} actually holds for each $a,b\in\cT_\alg$.
Therefore the multiplication on $\cT_\alg$ is jointly continuous for the topology
inherited from $\cT_{P,Q}$, and so it uniquely extends by continuity to $\cT_{P,Q}$.
Finally, if both $P$ and $Q$ are $m$-weighted sets, then we can repeat the above argument
with $p'=p$, $q'=q$, and $C=1$, which implies that each seminorm $\|\cdot\|_{q,p}$
is submultiplicative. This completes the proof.
\end{proof}

Let us now show that $\cT_\smth$ and $\cT_\hol$ are special cases of the above construction.

\begin{prop}
\label{prop:T_smth_PQ}
Let $P=\{ p^{(1)},\, p^{(2)},\ldots\}$, where $p^{(k)}_n=(1+n)^k$ for all $k\in\N$,
$n\in\Z_+$ (see Example~{\upshape\ref{example:C^inf_power}}). Then the identity map of $\cT_\alg$
uniquely extends to a topological algebra isomorphism $\cT_{P,P}\cong\cT_\smth$.
\end{prop}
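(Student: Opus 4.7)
The plan is to show that the two Fr\'echet topologies on $\cT_\alg$ --- the one inherited from $\cT_{P,P}$ and the one inherited from $\cT_\smth$ --- coincide. Since $\cT_\alg$ is dense in each of the two ambient Fr\'echet algebras, the identity map on $\cT_\alg$ then extends uniquely by continuity to a topological vector space isomorphism $\varphi\colon \cT_{P,P}\to\cT_\smth$. Multiplicativity of $\varphi$ follows automatically: approximating $a,b\in\cT_{P,P}$ by sequences in $\cT_\alg$ and using joint continuity of multiplication on both sides together with $\varphi|_{\cT_\alg}=\id$, one concludes $\varphi(ab)=\varphi(a)\varphi(b)$. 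Uniqueness of the extension is clear.

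For the comparison of seminorms, I use on $\cT_{P,P}$ the cofinal family $\|a\|_k^{PP}=\sum_{i,j}|c_{ij}|(1+i)^k(1+j)^k$ for $a=\sum c_{ij}v^iu^j$, and on $\cT_\smth$ the family $\|T_f+K\|_k^{\smth}=\sum_m |\hat f(m)|(1+|m|)^k+\sum_{p,q}|a_{pq}|(1+p+q)^k$ coming from the splitting $\cT_\smth\cong C^\infty(\TT)\oplus\cK_\smth$. The algebraic bridge between the two descriptions of $\cT_\alg$ is the pair of change-of-basis identities
\[
v^iu^j=T_{z^{i-j}}-\sum_{r=0}^{\min(i,j)-1} E_{p_r,q_r},\qquad E_{pq}=v^pu^q-v^{p+1}u^{q+1},
\]
valid for $i,j,p,q\ge 0$, together with $T_{z^m}=v^m$ and $T_{z^{-m}}=u^m$ for $m\ge 0$; here $(p_r,q_r)$ runs over $\min(i,j)$ lattice points on the diagonal $p-q=i-j$ inside $\{0\le p<i,\,0\le q<j\}$, and both identities are verified directly on $H^2$ with the trigonometric basis. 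Unwinding them gives $\hat f(m)=\sum_{i-j=m}c_{ij}$ and $a_{pq}=-\sum_{s\ge 0}c_{p+1+s,q+1+s}$ in one direction, and a piecewise formula for $c_{ij}$ as a combination of Fourier values $\hat f(\pm\cdot)$ on the two coordinate axes plus $a_{ij}-a_{i-1,j-1}$ in the bulk in the other (with $a_{pq}:=0$ for negative indices).

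With these formulas in hand, one direction is straightforward: from $1+|i-j|\le(1+i)(1+j)$ and the bound $(1+p+q)^k\le(1+i+j)^k\le(1+i)^k(1+j)^k$ after the substitution $p=i-1-s$, $q=j-1-s$, together with $\min(i,j)\le(1+i)(1+j)$ to absorb the number of matrix terms produced by a single monomial, one obtains $\|a\|_k^{\smth}\le 2\|a\|_{k+1}^{PP}$ for every $a\in\cT_\alg$. For the reverse, substitute the piecewise formula for $c_{ij}$ into $\|a\|_k^{PP}$ and use $(1+i)(1+j)\le(1+i+j)^2$ to deduce $\|a\|_k^{PP}\le C_k\|a\|_{2k}^{\smth}$ for a suitable constant $C_k$. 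Density of $\cT_\alg$ in $\cT_{P,P}$ is immediate from the definition of the latter as a space of absolutely summable series in the monomials $v^iu^j$, and density in $\cT_\smth$ follows from density of $\CC[z,z^{-1}]$ in $C^\infty(\TT)$ and of $M_\infty$ in $\cK_\smth$. The main --- though purely combinatorial --- obstacle is keeping track of the fact that each $v^iu^j$ with $i,j\ge 1$ contributes to $\min(i,j)$ matrix coefficients, so that the product weight $(1+i)^k(1+j)^k$ on the $\cT_{P,P}$-side must be traded against the slower-growing weight $(1+p+q)^k$ on the matrix side; this is precisely what forces the loss of a factor of $2$ in the index in the reverse inequality.
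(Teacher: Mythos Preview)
Your proposal is correct and follows essentially the same route as the paper. Both arguments reduce to showing that the two seminorm families on $\cT_\alg$ are equivalent, using the change-of-basis identities $e_{p,q}=v^pu^q-v^{p+1}u^{q+1}$ and $v^iu^j=z^{i-j}-\sum e_{p_r,q_r}$, and both obtain the same index shifts ($k\to k+1$ in one direction via $\min(i,j)\le(1+i)(1+j)$, and $k\to 2k$ in the other via $(1+i)(1+j)\le(1+i+j)^2$). The only cosmetic difference is that the paper bounds $\|v^iu^j\|'_k$ and $\|e_{i,j}\|_k$, $\|z^p\|_k$ basis-element-by-basis-element and then applies the triangle inequality, whereas you first write down the global coefficient formulas $\hat f(m)=\sum_{i-j=m}c_{ij}$, $a_{pq}=-\sum_{s\ge 0}c_{p+1+s,q+1+s}$, and $c_{ij}=a_{ij}-a_{i-1,j-1}$ (plus boundary terms) and estimate from there; these are dual bookkeeping choices leading to the same inequalities.
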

\begin{proof}
For each $i,j\in\Z_+$, let $e_{i,j}\in M_\infty$ denote the respective matrix unit
(i.e., the matrix whose $(i,j)$th entry is $1$ and the other entries are $0$).
The restriction of \eqref{CK=T} to $\CC[z,z^{-1}]\oplus M_\infty$
is a vector space isomorphism between $\CC[z,z^{-1}]\oplus M_\infty$ and $\cT_\alg$,
which acts on the basis $\{ z^k,\; e_{i,j} : k\in\Z,\; i,j\in\Z_+\}$ as follows (see \cite{Cuntz_cycl}):
\begin{equation}
\label{zevu}
z^k \mapsto v^k \quad (k\ge 0), \qquad
z^{-k} \mapsto u^k \quad (k\ge 0),\qquad
e_{i,j}\mapsto v^i(1-vu)u^j.
\end{equation}
Identifying the underlying vector spaces of $\CC[z,z^{-1}]\oplus M_\infty$ and $\cT_\alg$
via the above isomorphism, we conclude from \eqref{zevu} that
\begin{equation}
\label{v^iu^j}
v^i u^j=
\begin{cases}
z^{i-j}-(e_{i-j,0}+e_{i-j+1,1}+\cdots +e_{i-1,j-1}) & \text{if $i\ge j$},\\
z^{i-j}-(e_{0,j-i}+e_{1,j-i+1}+\cdots +e_{i-1,j-1}) & \text{if $i<j$}.
\end{cases}
\end{equation}
By definition, the topology on $\cT_\alg$ inherited from $\cT_{P,P}$ is given by the family
$\{\|\cdot\|_k : k\in\Z_+\}$ of norms, where
\[
\Bigl\| \sum_{i,j\in\Z_+} c_{ij} v^i u^j\Bigr\|_k
=\sum_{i,j\in\Z_+} |c_{ij}| (1+i)^k (1+j)^k \qquad (c_{ij}\in\CC).
\]
Since $\cT_\smth=C^\infty(\TT)\oplus\cK_\smth$ as locally convex spaces,
we see that the topology on $\cT_\alg$ inherited from $\cT_\smth$
is given by the family $\{\|\cdot\|'_k : k\in\Z_+\}$ of norms, where
\[
\Bigl\| \sum_{i,j\in\Z_+} a_{ij} e_{i,j}+\sum_{p\in\Z} b_p z^p \Bigr\|'_k
=\sum_{i,j\in\Z_+} |a_{ij}| (1+i+j)^k
+\sum_{p\in\Z} |b_p|(1+|p|)^k.
\]
Thus, to complete the proof, it suffices to show that the families
\begin{equation}
\label{norms_T_alg}
\{\|\cdot\|_k : k\in\Z_+\}\quad\text{and}\quad \{\|\cdot\|'_k : k\in\Z_+\}
\end{equation}
of norms are equivalent
on $\cT_\alg$.

If $i,j\in\Z_+$ and $i\ge j$, then we see from \eqref{v^iu^j} that
\[
\begin{split}
\| v^i u^j\|'_k
&=(1+i-j)^k+(1+i-j)^k+(3+i-j)^k+\cdots +(-1+i+j)^k\\
&\le (j+1)(1+i+j)^k \le (1+i+j)^{k+1} \le (1+i)^{k+1} (1+j)^{k+1}
=\| v^i u^j\|_{k+1}.
\end{split}
\]
A similar argument shows that $\| v^i u^j\|'_k\le\| v^i u^j\|_{k+1}$ whenever
$i,j\in\Z_+$ and $i<j$. Now for each $a=\sum_{i,j} c_{ij} v^i u^j\in\cT_\alg$ we have
\begin{equation}
\label{k_k+1}
\| a\|'_k \le \sum_{i,j} |c_{ij}| \| v^i u^j\|'_k
\le \sum_{i,j} |c_{ij}| \| v^i u^j\|_{k+1}
=\| a\|_{k+1}.
\end{equation}
On the other hand, for each $i,j\in\Z_+$ we have
\begin{equation}
\label{e_ij_k}
\begin{split}
\| e_{i,j}\|_k
&=\| v^i u^j-v^{i+1} u^{j+1}\|_k
=(1+i)^k (1+j)^k + (2+i)^k (2+j)^k\\
&\le (1+i+j)^{2k} +4^k (1+i+j)^{2k}
=(4^k+1) \| e_{i,j}\|'_{2k}.
\end{split}
\end{equation}
Also, for each $p\in\Z_+$ we have
\[
\| z^p\|_k = \| v^p\|_k =(1+p)^k =  \| z^p \|'_k,
\]
and, similarly, $\| z^{-p}\|_k = \| z^{-p}\|'_k$. Together with \eqref{e_ij_k}, this implies
that for each $a=\sum_{i,j} a_{ij} e_{i,j}+\sum_p b_p z^p\in\cT_\alg$ we have
\[
\| a\|_k \le (4^k+1) \sum_{i,j} |a_{ij}| \| e_{i,j}\|'_{2k} + \sum_p |b_p| \| z^p\|'_{2k}
\le (4^k+1) \| a\|'_{2k}.
\]
Comparing this with \eqref{k_k+1}, we conclude that the families \eqref{norms_T_alg}
are indeed equivalent. This completes the proof.
\end{proof}

\begin{prop}
\label{prop:T_hol_PQ}
Let $P=\{ p^{(1)},\, p^{(2)},\ldots\}$, where $p^{(k)}_n=k^n$ for all $k\in\N$,
$n\in\Z_+$ (see Example~{\upshape\ref{example:O_power}}). Then the identity map of $\cT_\alg$
uniquely extends to a topological algebra isomorphism $\cT_{P,P}\cong\cT_\hol$.
\end{prop}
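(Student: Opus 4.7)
The plan is to follow the same blueprint as the proof of Proposition~\ref{prop:T_smth_PQ}: exhibit two natural families of norms on $\cT_\alg$, one inherited from $\cT_{P,P}$ and one inherited from $\cT_\hol$ via the decomposition $\cT_\hol\cong\cO(\CC^\times)\oplus\cK_\hol$, and show that they are equivalent. Because all maps in sight are continuous and extend by density, proving equivalence of these two families on $\cT_\alg$ will yield the desired topological algebra isomorphism $\cT_{P,P}\cong\cT_\hol$.

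First, I need to describe both topologies explicitly on $\cT_\alg$. The topology coming from $\cT_{P,P}$ is defined by the norms
\[
\Bigl\|\sum_{i,j\in\Z_+} c_{ij} v^i u^j\Bigr\|_k=\sum_{i,j\in\Z_+}|c_{ij}|\,k^{i+j}\qquad (k\in\N).
\]
For the topology coming from $\cT_\hol$, the seminorm on the $\cK_\hol$-summand is $\|K\|_k=\sum_{i,j}|a_{ij}|k^{i+j}$ by definition; for the $\cO(\CC^\times)$-summand one has to check that the compact-open topology is equivalent to the family of weighted $\ell^1$-norms $\|\sum_{p\in\Z} b_p z^p\|_k=\sum_{p\in\Z}|b_p|k^{|p|}$ on Laurent coefficients. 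This is a standard Cauchy-estimate argument: for $f\in\cO(\CC^\times)$ with Laurent coefficients $b_p$ one uses $|b_p|\le\|f\|_\rho\rho^{-p}$ on circles of radius $\rho=2k$ (for $p\ge 0$) and $\rho=1/(2k)$ (for $p<0$), which gives a summable geometric bound. Thus the topology on $\cT_\alg$ inherited from $\cT_\hol$ is equivalent to the family
\[
\Bigl\|\sum_{i,j} a_{ij} e_{i,j}+\sum_p b_p z^p\Bigr\|'_k
=\sum_{i,j}|a_{ij}|k^{i+j}+\sum_p|b_p|k^{|p|}\qquad(k\in\N).
\]

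Next, I would estimate each family in terms of the other using the change-of-basis formulas~\eqref{v^iu^j} from the smooth case. For $i\ge j$,
\[
\|v^iu^j\|'_k=k^{i-j}+\sum_{m=0}^{j-1}k^{i-j+2m}\le(j+1)k^{i+j},
\]
and analogously $\|v^iu^j\|'_k\le(i+1)k^{i+j}$ when $i<j$. Since $i+j+1\le 2^{i+j}$ for all $i,j\in\Z_+$, one obtains $\|v^iu^j\|'_k\le(2k)^{i+j}=\|v^iu^j\|_{2k}$, and hence $\|a\|'_k\le\|a\|_{2k}$ for every $a\in\cT_\alg$. For the converse direction, the identity $e_{i,j}=v^iu^j-v^{i+1}u^{j+1}$ gives $\|e_{i,j}\|_k=(1+k^2)k^{i+j}=(1+k^2)\|e_{i,j}\|'_k$, while $\|z^p\|_k=k^{|p|}=\|z^p\|'_k$ for all $p\in\Z$; summing yields $\|a\|_k\le(1+k^2)\|a\|'_k$ for every $a\in\cT_\alg$.

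The two families of norms are therefore equivalent, so the identity on $\cT_\alg$ induces a topological isomorphism between the completions $\cT_{P,P}$ and $\cT_\hol$. The main obstacle is really only the preliminary step of confirming that the weighted $\ell^1$-norms $\sum|b_p|k^{|p|}$ generate the compact-open topology on $\cO(\CC^\times)$; once that is dispatched, the proof is essentially a direct translation of the smooth case, with the combinatorial factors $(1+n)^k$ replaced by exponential weights $k^n$ and the bound $(i+j+1)\le 2^{i+j}$ taking the place of polynomial estimates.
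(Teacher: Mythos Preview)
Your proposal is correct and follows exactly the approach the paper intends: the paper omits the proof entirely, stating only that it is similar to that of Proposition~\ref{prop:T_smth_PQ}, and your write-up carries out precisely that adaptation. The estimates $\|v^iu^j\|'_k\le(j+1)k^{i+j}\le(2k)^{i+j}$ and $\|e_{i,j}\|_k=(1+k^2)\|e_{i,j}\|'_k$ are the correct holomorphic analogues of the smooth-case bounds, and your remark that one must first verify the equivalence of the compact-open topology on $\cO(\CC^\times)$ with the Laurent-coefficient norms $\sum_p|b_p|k^{|p|}$ is a point the paper leaves implicit (just as it does for the Fourier-coefficient description of $C^\infty(\TT)$ in the smooth case).
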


We omit the proof, because it is similar to that of Proposition~\ref{prop:T_smth_PQ}.

\begin{remark}
Let $P$ denote the collection of all positive nondecreasing sequences $p\in (0,+\infty)^{\Z_+}$
satisfying $p_0=1$. It easily follows from Example~\ref{example:poly_power} that
$\cT_{P,P}=\cT_\alg$, both algebraically and topologically.
\end{remark}

\begin{remark}
Proposition~\ref{prop:T_hol_PQ} easily implies that $\cT_\hol$ is the Arens-Michael envelope
of $\cT_\alg$ (see, e.g., \cite[Chap. V]{X1} or \cite{Pir_qfree} for general information
on Arens-Michael envelopes).
In other words, $\cT_\hol$ is the universal Arens-Michael algebra with two distinguished
elements $u$ and $v$ satisfying $uv=1$ (cf. \cite[Theorem~2.14]{Panarin}).
For a similar characterization of $\cT_\smth$, see \cite[Satz~6.1]{Cuntz_biv_lok}.
\end{remark}

\section{Calculation of $\Omega^1\cT_\alg$}

In this section, we calculate explicitly the bimodule $\Omega^1\cT_\alg$ of noncommutative
differential $1$-forms over the algebraic Toeplitz algebra. This result will be applied in
Section~\ref{section:main} to finding sufficient conditions for the quasi-freeness
of $\cT_{P,Q}$.

\begin{lemma}
\label{lemma:hom_idemp}
Let $A$ be an algebra and $p\in A$ an idempotent. Then for each left $A$-module $M$
there exists a vector space isomorphism
\begin{equation}
\label{hom_idemp}
{_A}\h(Ap,M)\lriso pM, \qquad \varphi\mapsto\varphi(p).
\end{equation}
\end{lemma}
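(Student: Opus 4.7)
The plan is to exhibit an explicit inverse to the evaluation map $\varphi\mapsto\varphi(p)$ and verify both directions by a short calculation.

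First, I would check that the map is well defined, i.e.\ that $\varphi(p)\in pM$ for every $\varphi\in{_A}\h(Ap,M)$. This is immediate from the idempotency of $p$: since $p=p\cdot p$, $A$-linearity gives $\varphi(p)=\varphi(p\cdot p)=p\cdot\varphi(p)\in pM$. Linearity of $\varphi\mapsto\varphi(p)$ in $\varphi$ is clear.

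Next, I would construct the inverse. Given $m\in pM$, define $\psi_m\colon Ap\to M$ by $\psi_m(x)=xm$ for $x\in Ap$ (where $xm$ is computed using the $A$-module structure on $M$, noting that $x\in A$). This is obviously $A$-linear in $x$, and $\psi_m(p)=pm=m$ (using $m\in pM$, so $pm=m$). The assignment $m\mapsto\psi_m$ is linear in $m$.

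Finally, I would verify that the two constructions are mutually inverse. In one direction, $\psi_{\varphi(p)}(ap)=ap\cdot\varphi(p)=a\varphi(p\cdot p)=a\varphi(p)=\varphi(ap)$ for every $a\in A$, so $\psi_{\varphi(p)}=\varphi$. In the other direction, evaluating $\psi_m$ at $p$ gives $\psi_m(p)=m$ as noted above. The only subtle point is being careful that elements of $Ap$ are genuinely written as $ap$ for some (not necessarily unique) $a\in A$, but since both candidate definitions agree on such elements and $Ap$ is spanned by them, there is no ambiguity; equivalently, $\psi_m$ is simply the restriction to $Ap\subset A$ of the $A$-linear map $A\to M$, $a\mapsto am$. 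There is no real obstacle here — the lemma is a direct application of the universal property of the projective module $Ap$.
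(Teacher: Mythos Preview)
Your proof is correct and follows essentially the same approach as the paper: both exhibit the inverse $m\mapsto(ap\mapsto am)$ and check the two compositions. The only cosmetic difference is that the paper verifies well-definedness of $\varphi_m(ap)=am$ by showing $ap=0\Rightarrow am=a(1-p)m=0$, whereas you avoid this check entirely by defining $\psi_m$ as the restriction to $Ap$ of the globally defined map $a\mapsto am$; your verification that $\varphi(p)\in pM$ via $\varphi(p)=p\varphi(p)$ is likewise equivalent to the paper's $(1-p)\varphi(p)=\varphi(0)=0$.
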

\begin{proof}
For each $\varphi\in {_A}\h(Ap,M)$ we have $(1-p)\varphi(p)=\varphi(0)=0$, hence $\varphi(p)\in pM$.
Now observe that for each $m\in pM$ we have an $A$-module morphism
$\varphi_m\colon Ap\to M$ given by $\varphi_m(ap)=am$. Indeed, if $ap=0$, then
$am=(a-ap)m=a(1-p)m=0$, so $\varphi_m$ is well defined. Now it is easy to see that
the map $m\mapsto\varphi_m$ is the inverse of \eqref{hom_idemp}.
\end{proof}

From now on, we let $e'=vu\in\cT_\alg$ and $e=1-vu\in\cT_\alg$. Clearly, both $e'$ and $e$ are
idempotents.

\begin{theorem}
\label{thm:Omega_Talg}
Let $A=\cT_\alg$. There exists an $A$-bimodule isomorphism
\begin{equation}
\label{Omega_Talg}
\Omega^1 A \cong (A\Tens A)\oplus (Ae\Tens A).
\end{equation}
Under the above identification, the universal derivation
$d_A\colon A\to (A\Tens A)\oplus (Ae\Tens A)$ acts as follows:
\begin{equation}
\label{d_T_alg}
d_A(u)=(1\otimes 1,0),\qquad d_A(v)=(-v\otimes v,e\otimes 1).
\end{equation}
\end{theorem}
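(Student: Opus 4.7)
The approach is to verify directly that the pair $(N, D)$, with $N := (A\Tens A)\oplus(Ae\Tens A)$ carrying its natural bimodule structure and $D\colon A\to N$ determined by \eqref{d_T_alg}, satisfies the universal property of $(\Omega^1 A, d_A)$.

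First, I would check that $D$ extends to a well-defined derivation. Since $A$ is presented as $\CC\langle u,v\rangle/(uv-1)$, extending the Leibniz rule requires only that $D(u)\cdot v + u\cdot D(v)=0$. A direct computation gives $D(u)\cdot v = (1\otimes v,0)$ and $u\cdot D(v) = (-uv\otimes v,\, ue\otimes 1) = (-1\otimes v, 0)$, where the last equality uses $uv=1$ and $ue = u-uvu = 0$. So $D$ extends uniquely to a derivation $A\to N$.

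Second, I would establish the universal property. Let $M$ be any $A$-bimodule and $D'\colon A\to M$ a derivation. The summand $A\Tens A$ is the free bimodule on $1\otimes 1$, while $Ae\Tens A\cong A^\env\cdot(e\otimes 1)$ with $e\otimes 1$ an idempotent in $A^\env$; by Lemma~\ref{lemma:hom_idemp} applied to $A^\env$, bimodule maps from it to $M$ correspond bijectively to elements of $(e\otimes 1)M = eM$. So any bimodule map $\Phi\colon N\to M$ is determined by $\Phi(1\otimes 1,0)\in M$ and $\Phi(0,e\otimes 1)\in eM$. The constraints $\Phi D(u)=D'(u)$ and $\Phi D(v)=D'(v)$, together with bimodule linearity, force
\[
\Phi(1\otimes 1,0) = D'(u), \qquad \Phi(0,e\otimes 1) = D'(v) + v D'(u) v.
\]

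The key technical point, which I expect to be the main obstacle, is verifying that $m:=D'(v)+vD'(u)v$ actually lies in $eM$, i.e., $vu\cdot m = 0$. Differentiating $uv=1$ gives $uD'(v) = -D'(u)v$, and multiplying on the left by $v$ yields $vuD'(v) = -vD'(u)v$. Combined with $vuv=v$, this gives $vu\cdot m = vuD'(v) + v D'(u) v = 0$, so $m\in eM$ and $\Phi$ is a well-defined bimodule map. Finally, $\Phi\circ D$ is a derivation agreeing with $D'$ on the algebra generators $u,v$, hence on all of $A$; uniqueness of $\Phi$ is automatic since its values on the bimodule generators were forced.
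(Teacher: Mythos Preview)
Your proof is correct and follows essentially the same approach as the paper's: both identify $\Der(A,M)$ with $M\oplus eM$ via the bijection $D'\mapsto (D'(u),\,D'(v)+vD'(u)v)$, the key computation being that $D'(v)+vD'(u)v\in eM$. The only organizational difference is that the paper first establishes the natural isomorphism $\Der(A,M)\cong M\oplus eM$ and then invokes Yoneda to read off both the representing object and the universal derivation, whereas you start from the candidate $(N,D)$ and verify its universal property directly; the underlying calculation is identical.
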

\begin{proof}
For each $A$-bimodule $M$, every derivation $d\colon A\to M$ is uniquely determined
by the elements $m=du$ and $n=dv$. Conversely, given $(m,n)\in M\times M$,
we conclude from \eqref{der=hom} that a derivation $d\colon A\to M$ taking
$u$ to $m$ and $v$ to $n$ exists if and only if we have $(u,m)(v,n)=(1,0)$
in $A\times M$, i.e., if and only if
\begin{equation}
\label{unmv}
un+mv=0.
\end{equation}
Thus we have a vector space isomorphism
\begin{equation}
\label{Der_MM}
\Der(A,M)\cong\Bigl\{\, (m,n)\in M\times M : un+mv=0\,\Bigr\},\quad d\mapsto (du,dv).
\end{equation}
Now observe that for each $m\in M$ the element $n=-vmv$ satisfies \eqref{unmv}.
Also, for each $\ell\in eM$ we have $u\ell=uvu\ell=ue'\ell=0$, so $n=-vmv+\ell$ also
satisfies \eqref{unmv}.
Thus we have a linear map
\[
\varphi\colon M\oplus eM\to \Bigl\{\, (m,n)\in M\times M : un+mv=0\,\Bigr\},
\qquad (m,\ell)\mapsto (m,-vmv+\ell).
\]
On the other hand, if $(m,n)\in M\times M$ satisfies \eqref{unmv}, then
\[
e'(n+vmv)=vun+vuvmv=v(un+mv)=0,
\]
so $n+vmv\in eM$, and we have a linear map
\[
\psi\colon \Bigl\{\, (m,n)\in M\times M : un+mv=0\,\Bigr\} \to M\oplus eM,
\qquad (m,n)\mapsto (m,n+vmv).
\]
Clearly, $\varphi\psi=\id$ and $\psi\varphi=\id$, whence $\varphi$ and $\psi$ are
isomorphisms. Thus we have
\begin{equation}
\label{MeM}
\Bigl\{\, (m,n)\in M\times M : un+mv=0\,\Bigr\} \cong M\oplus eM.
\end{equation}
Let now $p=e\otimes 1\in A^\env$. Since $p$ is an idempotent, Lemma~\ref{lemma:hom_idemp}
yields vector space isomorphisms
\begin{equation}
\label{MeM2}
\begin{split}
M\oplus eM
&=M\oplus pM
\cong {_{A^\env}}\h(A^\env,M)\oplus {_{A^\env}}\h(A^\env p,M)\\
&\cong {_{A^\env}}\h(A^\env\oplus A^\env p,M)
\cong {_A}\h_A\bigl((A\Tens A)\oplus (Ae\Tens A),M\bigr).
\end{split}
\end{equation}
Since all the above isomorphisms are natural in $M$, we conclude from
\eqref{Der_MM}, \eqref{MeM}, and \eqref{MeM2} that
$\Omega^1 A \cong (A\Tens A)\oplus (Ae\Tens A)$, as required.

To complete the proof, let $M=(A\Tens A)\oplus (Ae\Tens A)$, and observe that
the universal derivation $d_A\colon A\to M$ corresponds to the identity map $\id_M$ under
the composition of isomorphisms \eqref{Der_MM}, \eqref{MeM}, and \eqref{MeM2}.
Reading \eqref{MeM2} from right to left, we see the pair $(m,\ell)\in M\oplus eM$
corresponding to $\id_M$ is given by $m=(1\otimes 1,0)$ and $\ell=(0,e\otimes 1)$.
Hence
\[
(d_A u,d_A v)=\varphi(m,\ell)=(m,-vmv+\ell)=\bigl((1\otimes 1,0),(-v\otimes v,e\otimes 1)\bigr),
\]
as required.
\end{proof}

\section{Homological epimorphisms from quasi-free algebras}

Our next goal is to give a convenient criterion for a $\Ptens$-algebra homomorphism
$\varphi\colon A\to B$ to be a homological epimorphism, assuming that $A$ is quasi-free
and that $\varphi$ is an epimorphism. In the next section, we apply this result to
the embedding of $\cT_\alg$ into $\cT_{P,Q}$.

Let $A$ and $B$ be $\Ptens$-algebras, and let $\varphi\colon A\to B$ be a continuous homomorphism.
The universal property of $\Omega^1 A$ yields an $A$-$\Ptens$-bimodule morphism
$\Omega^1 A\to\Omega^1 B$ uniquely determined by $d_A a\mapsto d_B(\varphi(a))$.
Tensoring $\Omega^1 A$ by $B$ on both sides, we get a $B$-$\Ptens$-bimodule morphism
\begin{equation}
\label{alpha}
\alpha(\varphi)\colon B\ptens{A}\Omega^1 A\ptens{A} B\to\Omega^1 B,
\qquad 1\otimes d_A a\otimes 1 \mapsto d_B(\varphi(a)).
\end{equation}

\begin{theorem}
\label{thm:qfree_homepi}
Let $\varphi\colon A\to B$ be a $\Ptens$-algebra epimorphism. Suppose that $A$ is quasi-free.
Then the following conditions are equivalent:
\begin{mycompactenum}
\item
$\alpha(\varphi)$ is an isomorphism;
\item
there exists a derivation $D\colon B\to B\ptens{A}\Omega^1 A\ptens{A} B$ making the
diagram
\begin{equation}
\label{D}
\xymatrix{
A \ar[r]^{d_A} \ar[d]_\varphi & \Omega^1 A \ar[d]^\beta\\
B \ar[r]^(.3)D & B\ptens{A}\Omega^1 A\ptens{A} B
}
\end{equation}
commute, where $\beta$ is given by $\omega\mapsto 1\otimes\omega\otimes 1$;
\item
$\varphi$ is a homological epimorphism.
\end{mycompactenum}
If the above conditions are satisfied, then $B$ is quasi-free.
\end{theorem}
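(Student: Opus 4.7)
The plan is to establish (i)$\Leftrightarrow$(iii) via the short projective resolution of $A$ afforded by quasi-freeness, then (i)$\Leftrightarrow$(ii) via the universal property of $\Omega^1 B$ combined with the epi characterization, and finally to conclude the last assertion from Remark~\ref{rem:homepi_db}.

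For (i)$\Leftrightarrow$(iii): since $A$ is quasi-free, the admissible sequence \eqref{Omega^1_can} for $A$ is a projective resolution of $A$ in $A\bimod A$ of length at most $1$ (the term $A\Ptens A\cong A^\env$ is free as a bimodule, and $\Omega^1 A$ is projective by quasi-freeness). Applying $B\ptens{A}(-)\ptens{A}B$ to this resolution and using the identification $B\ptens{A}(A\Ptens A)\ptens{A}B\cong B\Ptens B$, I would represent the canonical morphism $B\Lptens{A}B\to B$ by the three-term complex
\[
B\ptens{A}\Omega^1 A\ptens{A}B\xrightarrow{\;\;\;} B\Ptens B\xra{\mu_B}B.
\]
The right-hand pair is admissibly exact by \eqref{Omega^1_can} for $B$ with kernel $\Omega^1 B$, so admissibility of the full three-term complex reduces to the induced $B$-bimodule map $B\ptens{A}\Omega^1 A\ptens{A}B\to\Omega^1 B$ being a topological isomorphism; unwinding definitions, this map is exactly $\alpha(\varphi)$.

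For (i)$\Rightarrow$(ii), I would take $D=\alpha(\varphi)^{-1}\circ d_B$ and verify \eqref{D} on generators: $D\varphi(a)=\alpha(\varphi)^{-1}d_B\varphi(a)=1\otimes d_A a\otimes 1=\beta(d_A a)$. Conversely, given $D$ as in (ii), the universal property of $\Omega^1 B$ yields a unique $B$-bimodule morphism $\tilde D\colon\Omega^1 B\to B\ptens{A}\Omega^1 A\ptens{A}B$ with $\tilde D\circ d_B=D$. The identity $\tilde D\circ\alpha(\varphi)=\id$ follows from a universal-property argument: $B$-bimodule morphisms out of $B\ptens{A}\Omega^1 A\ptens{A}B$ correspond to continuous derivations $A\to M$ via the adjunction between $B\ptens{A}(-)\ptens{A}B$ and restriction of scalars combined with the universal property of $\Omega^1 A$, and both $\tilde D\circ\alpha(\varphi)$ and $\id$ correspond to the derivation $a\mapsto 1\otimes d_A a\otimes 1$.

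The main obstacle is $\alpha(\varphi)\circ\tilde D=\id_{\Omega^1 B}$, where the epi hypothesis is essential. By \eqref{D}, the derivations $\alpha(\varphi)\circ D$ and $d_B$ from $B$ to $\Omega^1 B$ coincide on $\varphi(A)\subset B$, but $\varphi(A)$ need not be dense. I would invoke \eqref{der=hom} to reinterpret each derivation as a $\Ptens$-algebra homomorphism $B\to B\times\Omega^1 B$ splitting the first projection; these two homomorphisms have equal precompositions with $\varphi$, so the epi property of $\varphi$ forces them to coincide on all of $B$. Thus $\alpha(\varphi)\circ D=d_B$, i.e., $\alpha(\varphi)\circ\tilde D\circ d_B=d_B$, and the uniqueness part of the universal property of $\Omega^1 B$ yields $\alpha(\varphi)\circ\tilde D=\id_{\Omega^1 B}$. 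Finally, under the equivalent conditions, Remark~\ref{rem:homepi_db} applied to (iii) gives $\db B\le\db A\le 1$, so $B$ is quasi-free.
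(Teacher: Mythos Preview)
Your proposal is correct and follows essentially the same route as the paper: the equivalence (i)$\Leftrightarrow$(iii) via tensoring the short projective resolution \eqref{Omega^1_can} with $B$ on both sides and comparing with the canonical sequence for $B$; the construction $D=\alpha(\varphi)^{-1}\circ d_B$ for (i)$\Rightarrow$(ii); and for (ii)$\Rightarrow$(i), producing the inverse $\tilde D$ via the universal property of $\Omega^1 B$, checking $\tilde D\circ\alpha(\varphi)=\id$ on generators (your adjunction phrasing is equivalent), and handling $\alpha(\varphi)\circ\tilde D=\id$ by passing from derivations to $\Ptens$-algebra homomorphisms via \eqref{der=hom} and invoking the epimorphism hypothesis. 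The paper isolates this last step as a separate lemma (two derivations $B\to M$ agreeing after precomposition with an epimorphism $\varphi$ must coincide), but your inlined version is the same argument.
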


To prove Theorem \ref{thm:qfree_homepi}, we need the following simple lemma.

\begin{lemma}
\label{lemma:der_epi}
Let $\varphi\colon A\to B$ be a $\Ptens$-algebra epimorphism, and let
$M$ be a $B$-$\Ptens$-bimodule. Suppose that $d_1,d_2\colon B\to M$ are derivations
such that $d_1\varphi=d_2\varphi$. Then $d_1=d_2$.
\end{lemma}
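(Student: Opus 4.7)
The plan is to reduce the claim to the defining property of $\varphi$ as an epimorphism in $\Ptensalg$ by converting derivations into algebra homomorphisms via the standard semidirect product construction recalled in~\eqref{der=hom}.

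First, I would form the $\Ptens$-algebra $B\times M$ with multiplication $(b,m)(b',m')=(bb',bm'+mb')$ and let $p_1\colon B\times M\to B$ denote the projection onto the first factor. By~\eqref{der=hom}, each derivation $d_i\colon B\to M$ ($i=1,2$) corresponds to a $\Ptens$-algebra homomorphism
\[
\psi_i\colon B\to B\times M,\qquad b\mapsto(b,d_i(b)),
\]
satisfying $p_1\psi_i=\id_B$.

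Next, I would examine the compositions $\psi_i\varphi\colon A\to B\times M$. For any $a\in A$ we have
\[
\psi_i\varphi(a)=(\varphi(a),d_i(\varphi(a))),
\]
so the hypothesis $d_1\varphi=d_2\varphi$ immediately gives $\psi_1\varphi=\psi_2\varphi$ as morphisms in $\Ptensalg$.

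Finally, since $\varphi$ is an epimorphism in $\Ptensalg$, cancellation on the right yields $\psi_1=\psi_2$, and then projecting on the $M$-component gives $d_1=d_2$. There is essentially no obstacle here — the argument is a direct translation of the universal characterization~\eqref{der=hom} into the epimorphism property, and the only point worth noting is that we really do use the categorical epimorphism property (rather than, say, density of $\varphi(A)$ in $B$), which is why the lemma is stated with this exact hypothesis.
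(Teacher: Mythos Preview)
Your proof is correct and follows exactly the same argument as the paper: both convert the derivations $d_i$ into homomorphisms $\psi_i\colon B\to B\times M$, $b\mapsto(b,d_i(b))$, observe that $\psi_1\varphi=\psi_2\varphi$, and invoke the epimorphism property to conclude $\psi_1=\psi_2$ and hence $d_1=d_2$. The paper's version is merely terser.
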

\begin{proof}
Define $\Ptens$-algebra homomorphisms $\psi_1,\psi_2\colon B\to B\times M$
by $\psi_i(b)=(b,d_i(b))$ ($b\in B$, $i=1,2$); see~\eqref{der=hom}. We clearly have
$\psi_1\varphi=\psi_2\varphi$, whence $\psi_1=\psi_2$ and $d_1=d_2$.
\end{proof}

\begin{proof}[Proof of Theorem {\upshape\ref{thm:qfree_homepi}}]
$\mathrm{(i)}\Longrightarrow\mathrm{(ii)}$.
Define a derivation $D\colon B\to B\ptens{A}\Omega^1 A\ptens{A} B$ by $D=\alpha^{-1} d_B$,
where $\alpha=\alpha(\varphi)$. To show that \eqref{D} commutes, it suffices to prove that
$\alpha D\varphi=\alpha \beta d_A$, which is equivalent to $d_B\varphi=\alpha \beta d_A$.
However, the latter relation is precisely the definition of $\alpha$ (see \eqref{alpha}).

$\mathrm{(ii)}\Longrightarrow\mathrm{(i)}$.
The universal property of $\Omega^1 B$ yields a $B$-$\Ptens$-bimodule morphism
\[
\tau\colon\Omega^1 B\to B\ptens{A}\Omega^1 A\ptens{A} B,\qquad
\tau d_B=D.
\]
We claim that $\tau$ is the inverse of $\alpha$. Indeed, for each $a\in A$ we have
\[
(\tau\alpha)(1\otimes d_A a\otimes 1)
=\tau(d_B(\varphi(a)))
=D(\varphi(a))
=1\otimes d_A a\otimes 1.
\]
Since $B\ptens{A}\Omega^1 A\ptens{A} B$ is generated (as a $B$-$\Ptens$-bimodule)
by elements of the form $1\otimes d_A a\otimes 1$ ($a\in A$), we conclude that $\tau\alpha=\id$.
To prove that $\alpha\tau=\id$, it suffices to show that $\alpha\tau d_B=d_B$
(by the universal property of $\Omega^1 B$). Since both $d_B$ and $\alpha\tau d_B$
are derivations, Lemma~\ref{lemma:der_epi} implies that $\alpha\tau d_B=d_B$
whenever $\alpha\tau d_B\varphi=d_B\varphi$. For each $a\in A$, we have
\[
(\alpha\tau d_B\varphi)(a)=(\alpha D \varphi)(a)=(\alpha\beta d_A)(a)=(d_B\varphi)(a).
\]
In view of the above remarks, this proves that $\alpha\tau=\id$. Hence $\alpha$ is an isomorphism.

$\mathrm{(i)}\iff\mathrm{(iii)}$.
Since $A$ is quasi-free, we see that \eqref{Omega^1_can} is a projective resolution
of $A$ in $A\bimod A$.
Tensoring \eqref{Omega^1_can} by $B$ on both sides, we conclude
that $\varphi$ is a homological epimorphism if and only if the resulting sequence
\begin{equation}
\label{B-Omega-B}
\xymatrix{
0 \ar[r] & B\ptens{A}\Omega^1 A\ptens{A} B \ar[r]^(.65){k_B}
& B\Ptens B \ar[r]^(.6){\mu_B}
& B \ar[r] & 0
}
\end{equation}
is admissible (where $k_B$ is induced by $j_A$ in the obvious way).
Clearly, \eqref{B-Omega-B} fits into the commutative diagram
\begin{equation}
\label{B-Omega-B2}
\xymatrix{
0 \ar[r] & B\ptens{A}\Omega^1 A\ptens{A} B \ar[r]^(.65){k_B} \ar[d]_\alpha
& B\Ptens B \ar[r]^(.6){\mu_B} \ar@{=}[d]
& B \ar[r] \ar@{=}[d] & 0\\
0 \ar[r] & \Omega^1 B \ar[r]^{j_B} & B\Ptens B \ar[r]^(.6){\mu_B}
& B \ar[r] & 0
}
\end{equation}
Since the bottom row of \eqref{B-Omega-B2} is admissible, we conclude that
\eqref{B-Omega-B} is admissible if and only if $\alpha$ is an isomorphism.

To complete the proof, recall that (iii) together with the assumption that $A$ is quasi-free
implies that $B$ is quasi-free as well (see Remark~\ref{rem:homepi_db}).
\end{proof}

\begin{remark}
Observe that we have $\mathrm{(i)}\Longrightarrow\mathrm{(ii)}$ for
any $\Ptens$-algebra homomorphism, and
$\mathrm{(ii)}\Longrightarrow\mathrm{(i)}$ for any $\Ptens$-algebra epimorphism
(see the proof of Theorem~\ref{thm:qfree_homepi}). In both cases, the assumption that
$A$ is quasi-free is inessential.
\end{remark}

\section{The main result}
\label{section:main}

Given two sequences $p,q\in [0,+\infty)^{\Z_+}$, we define their {\em convolution}
$p*q\in [0,+\infty)^{\Z_+}$ by
\[
(p*q)_k=\sum_{i+j=k} p_i q_j \qquad (k\in\Z_+).
\]
If $P$ and $Q$ are K\"othe sets on $\Z_+$, we let
\[
P*Q=\{ p*q : p\in P,\; q\in Q\}.
\]
Clearly, $P*Q$ is a K\"othe set as well. Finally, we say that $P$ is {\em dominated} by $Q$
and write $P\prec Q$ if for each $p\in P$ there exist $q\in Q$ and $C>0$ such that
$p_i\le Cq_i$ ($i\in\Z_+$).

\begin{theorem}
\label{thm:main}
Let $P$ and $Q$ be monotone weighted sets on $\Z_+$ such that
$P*P\prec P$ and $Q*Q\prec Q$. Then the embedding of $\cT_\alg$
into $\cT_{P,Q}$ is a homological epimorphism. As a consequence, $\cT_{P,Q}$ is quasi-free.
\end{theorem}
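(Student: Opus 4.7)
The plan is to apply Theorem~\ref{thm:qfree_homepi} with $A=\cT_\alg$, $B=\cT_{P,Q}$, and $\varphi\colon A\hookrightarrow B$ the inclusion. Since $A$ is quasi-free by Proposition~\ref{prop:Talg_qfree}, and since $\cT_\alg$ is a dense subspace of $\cT_{P,Q}$, a standard density/Hausdorff argument shows $\varphi$ is a $\Ptens$-algebra epimorphism. It thus suffices to verify condition~(ii) of Theorem~\ref{thm:qfree_homepi}, namely to construct a derivation $D\colon B\to B\ptens{A}\Omega^1 A\ptens{A} B$ satisfying $D\varphi=\beta d_A$. If this is done, both the homological epimorphism conclusion and the quasi-freeness of $\cT_{P,Q}$ follow directly from Theorem~\ref{thm:qfree_homepi}.

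Using the identification $B\ptens{A}\Omega^1 A\ptens{A} B\cong (B\Ptens B)\oplus (Be\Ptens B)$ induced by Theorem~\ref{thm:Omega_Talg} (since $A\otimes A$ is the free $A$-bimodule and $Ae$ is a left-$A$-module direct summand of $A$), I first define $D$ on $A$ by $D(u)=(1\otimes 1,0)$ and $D(v)=(-v\otimes v,\,e\otimes 1)$, which is precisely $\beta d_A$. The Leibniz rule then yields the closed-form expression
\[
D(v^i u^j) = \Bigl(-\sum_{k=0}^{i-1} v^{k+1}\otimes v^{i-k}u^j + \sum_{\ell=0}^{j-1} v^i u^\ell\otimes u^{j-\ell-1},\; \sum_{k=0}^{i-1} v^k e\otimes v^{i-k-1}u^j\Bigr).
\]
The plan is to extend $D$ to $\cT_{P,Q}$ by continuity; this is legitimate because $\cT_\alg$ is dense in $\cT_{P,Q}$, the codomain is complete, and the derivation identity passes to the completion by joint continuity of the $B$-bimodule action.

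The crux of the argument is the continuity estimate: for each pair $(q,p)\in Q\times P$, one must produce $(q^*,p^*)\in Q\times P$ and $C>0$ such that $\|D(v^iu^j)\|_{\text{codomain}}\le C\,q^*_i p^*_j = C\|v^iu^j\|_{(q^*,p^*)}$ in the projective tensor seminorm. The terms in the first slot contribute bounds of the form $p_j\sum_{k=1}^{i}q_k q_{i+1-k}$ and $q_i\sum_{\ell=0}^{j-1}p_\ell p_{j-1-\ell}$, each of which is a convolution $[q*q]_{i+1}$, $[p*p]_{j-1}$. The hypothesis $Q*Q\prec Q$ dominates the first by $C_1\tilde q_{i+1}$ and the weighted-set axiom then converts $\tilde q_{i+1}$ to a bound of the form $C_2 q^*_i$ via $\tilde q_{i+1}\le C\,\tilde q'_i\tilde q'_1$; the symmetric $P*P\prec P$ bound together with monotonicity handles $[p*p]_{j-1}\le C_3\tilde p_j$. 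For the second slot, the estimate $\|v^k e\|_{(q,p)}\le q_k+q_{k+1}p_1$ reduces the sum to another convolution in $q$, handled identically. By axiom (P2) for K\"othe sets, one can dominate all intermediate sequences by a single $(q^*,p^*)\in Q\times P$. The bound then extends from monomials to all of $\cT_\alg$ via~\eqref{norm_sum_monom}, which converts the monomial estimate into a seminorm inequality valid on the entire dense subalgebra. The main obstacle will be this bookkeeping: carefully tracking the index shifts and repeatedly invoking the weighted-set and $\prec$-conditions to bring every convolution/shift back into a single $(q^*,p^*)$-seminorm of the form $\|v^iu^j\|_{(q^*,p^*)}$, so that triangle-inequality-sum estimates of the form $\|D(a)\|\le C\|a\|_{(q^*,p^*)}$ hold uniformly on $\cT_\alg$.
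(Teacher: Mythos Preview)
Your proposal is correct and follows essentially the same approach as the paper's proof: both apply Theorem~\ref{thm:qfree_homepi}(ii) by identifying $B\ptens{A}\Omega^1 A\ptens{A} B\cong (B\Ptens B)\oplus(Be\Ptens B)$ via Theorem~\ref{thm:Omega_Talg}, then show that $d_A$ on $\cT_\alg$ is continuous for the inherited topologies so that it extends to the required $D$. Your outline of the continuity estimate (convolution bounds controlled by $P*P\prec P$, $Q*Q\prec Q$, index shifts absorbed via the weighted-set axiom and monotonicity, and passage from monomials to all of $\cT_\alg$ via~\eqref{norm_sum_monom}) matches the paper's computations in \eqref{dvu}--\eqref{vdu2} essentially term for term.
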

\begin{proof}
Let $A=\cT_\alg$ and $B=\cT_{P,Q}$. To prove the result, we will show that the embedding
of $A$ into $B$ satisfies condition (ii) of Theorem~\ref{thm:qfree_homepi}.
Using Theorem~\ref{thm:Omega_Talg}, we see that diagram~\eqref{D} looks as follows:
\begin{equation*}
\xymatrix{
A \ar[r]^(.25){d_A} \ar@{^{(}->}[d] &   (A\Tens A)\oplus (Ae\Tens A) \ar@{^{(}->}[d]\\
B \ar[r]^(.25)D & (B\Ptens B)\oplus (Be\Ptens B)
}
\end{equation*}
Hence, to show that $D$ exists, it suffices to prove the continuity of $d=d_A$ for
the topologies on $A$ and $(A\Tens A)\oplus (Ae\Tens A)$
inherited from $B$ and $(B\Ptens B)\oplus (Be\Ptens B)$, respectively.

Using \eqref{Kothe_Ptens}, we can identify the underlying topological vector space
of $B\Ptens B$ with $\lambda(Q\times P\times Q\times P)$. Since $Be$ is a topological
direct summand of $B$, we can interpret $Be\Ptens B$ as a subspace of $B\Ptens B$.
Given $q\in Q$ and $p\in P$, define a seminorm $\|\cdot\|'_{q,p}$ on
$(B\Ptens B)\oplus (Be\Ptens B)$ by
\[
\| (c,d)\|'_{q,p}=\| c\|_{q,p,q,p}+\| d\|_{q,p,q,p}
\qquad (c\in B\Ptens B,\; d\in Be\Ptens B).
\]
Clearly, $\{\|\cdot\|'_{q,p} : q\in Q,\; p\in P\}$ is a defining family of seminorms
on $(B\Ptens B)\oplus (Be\Ptens B)$.

Let $i,j\in\Z_+$. Using \eqref{d_T_alg}, we see that
\begin{equation}
\label{dvu}
\begin{split}
d(v^i)u^j
&=\sum_{k=0}^{i-1} v^k\cdot dv\cdot v^{i-k-1}u^j
=\sum_{k=0}^{i-1} (-v^{k+1}\otimes v^{i-k}u^j\, ,\, v^k e\otimes v^{i-k-1}u^j)\\
&=\sum_{k=0}^{i-1} (-v^{k+1}\otimes v^{i-k}u^j\, ,\,
v^k\otimes v^{i-k-1}u^j-v^{k+1}u\otimes v^{i-k-1}u^j).
\end{split}
\end{equation}
Similarly,
\begin{equation}
\label{vdu}
v^i d(u^j)
=\sum_{\ell=0}^{j-1} v^i u^\ell\cdot du\cdot u^{j-\ell-1}
=\sum_{\ell=0}^{j-1} (v^i u^\ell\otimes u^{j-\ell-1},0).
\end{equation}
Take any $p\in P$, $q\in Q$ and find $C>0$, $p'\in P$, and $q'\in Q$ such that
$p*p\le Cp'$, $q*q\le Cq'$. Choose also $q''\in Q$ and $C_1>0$ such that
$q'_{k+\ell}\le C_1 q''_k q''_\ell$ ($k,\ell\in\Z_+$).
Without loss of generality, we may also assume that
$p\le p'$ and $q\le q'\le q''$. Using \eqref{dvu}, we obtain
\begin{equation}
\label{dvu2}
\begin{split}
\| d(v^i)u^j\|'_{q,p}
&=\sum_{k=0}^{i-1} (q_{k+1} q_{i-k} p_j + q_k q_{i-k-1} p_j + q_{k+1} p_1 q_{i-k-1} p_j)\\
&\le p_j\bigl((q*q)_{i+1}+(q*q)_{i-1}+p_1(q*q)_i\bigr)\\
&\le Cp_j(q'_{i+1}+q'_{i-1}+p_1 q'_i)\\
&\le Cp_j(C_1 q''_1 q''_i + q''_i + p_1 q''_i)=
C_2 \| v^i u^j\|_{q'',p}\, ,
\end{split}
\end{equation}
where $C_2=C(C_1 q''_1+p_1+1)$. Similarly, \eqref{vdu} implies that
\begin{equation}
\label{vdu2}
\| v^i d(u^j)\|'_{q,p}
=\sum_{\ell=0}^{j-1} q_i p_\ell p_{j-\ell-1}
=q_i (p*p)_{j-1} \le C q_i p'_j
= C \| v^i u^j\|_{q,p'}.
\end{equation}
Combining \eqref{dvu2} and \eqref{vdu2}, we see that
\[
\| d(v^i u^j)\|'_{q,p}
=\| d(v^i)u^j+v^i d(u^j)\|'_{q,p}
\le 2C_2 \| v^i u^j\|_{q'',p'}.
\]
Taking into account \eqref{norm_sum_monom}, we conclude that
\[
\| d(a)\|'_{q,p}\le 2C_2 \| a\|_{q'',p'} \qquad (a\in\cT_\alg).
\]
Hence $d$ is continuous for the topologies determined by $P$ and $Q$.
In view of the above remarks, this implies the existence of $D$ and completes the proof.
\end{proof}

\begin{corollary}
The embedding of $\cT_\alg$ into $\cT_\smth$ is a homological epimorphism.
As a consequence, $\cT_\smth$ is quasi-free.
\end{corollary}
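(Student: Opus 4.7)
The plan is to derive the corollary as a direct application of Theorem~\ref{thm:main} via the identification of $\cT_\smth$ with $\cT_{P,P}$ supplied by Proposition~\ref{prop:T_smth_PQ}. So I take $P=\{p^{(k)} : k\in\N\}$ with $p^{(k)}_n=(1+n)^k$, and the only thing left is to check the hypotheses of Theorem~\ref{thm:main}, namely that $P$ is a monotone weighted set satisfying $P*P\prec P$.

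First, monotonicity is immediate since $n\mapsto (1+n)^k$ is nondecreasing. To see that $P$ is even an $m$-weighted set, note that $p^{(k)}_0=1$ and, using $1+i+j\le (1+i)(1+j)$, we have
\[
p^{(k)}_{i+j}=(1+i+j)^k\le (1+i)^k(1+j)^k=p^{(k)}_i p^{(k)}_j \qquad (i,j\in\Z_+).
\]
(This is already mentioned in Example~\ref{example:C^inf_power}, so I can just invoke it.)

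For the convolution condition, I estimate directly: for any $k\in\N$ and $m\in\Z_+$,
\[
(p^{(k)}*p^{(k)})_m=\sum_{i+j=m}(1+i)^k(1+j)^k\le (m+1)\,(1+m)^{2k}=(1+m)^{2k+1}=p^{(2k+1)}_m,
\]
so $p^{(k)}*p^{(k)}\le p^{(2k+1)}$, which gives $P*P\prec P$ (with constant $C=1$).

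With these verifications in hand, Theorem~\ref{thm:main} applies to $\cT_{P,P}$, so the embedding $\cT_\alg\hookrightarrow\cT_{P,P}$ is a homological epimorphism and $\cT_{P,P}$ is quasi-free. Composing with the topological algebra isomorphism $\cT_{P,P}\cong\cT_\smth$ of Proposition~\ref{prop:T_smth_PQ} (which, by construction, extends the identity on $\cT_\alg$) yields the desired conclusion. I don't foresee any real obstacle here; the entire content is the elementary weight computation above, and the hard work has already been done in Theorem~\ref{thm:main}.
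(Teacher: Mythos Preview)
Your proof is correct and follows essentially the same approach as the paper: identify $\cT_\smth$ with $\cT_{P,P}$ via Proposition~\ref{prop:T_smth_PQ}, verify $P*P\prec P$ by the elementary estimate $(p^{(k)}*p^{(k)})_n\le (1+n)^{2k+1}=p^{(2k+1)}_n$, and invoke Theorem~\ref{thm:main}. The only difference is that you spell out the monotone weighted set verification explicitly, whereas the paper leaves this implicit (it was noted in Example~\ref{example:C^inf_power}).
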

\begin{proof}
By Proposition \ref{prop:T_smth_PQ}, we have $\cT_\smth=\cT_{P,P}$, where
$P=\{ p^{(k)}\}_{k\in\N}$ and $p^{(k)}_n=(1+n)^k$ for all $k\in\N$,
$n\in\Z_+$. Given $k\in\N$ and $n\in\Z_+$, we have
\[
(p^{(k)}*p^{(k)})_n
=\sum_{i+j=n} (1+i)^k (1+j)^k
\le (1+n)(1+n)^k (1+n)^k=p^{(2k+1)}_n.
\]
Hence $P*P\prec P$, and Theorem~\ref{thm:main} implies the result.
\end{proof}

\begin{corollary}
The embedding of $\cT_\alg$ into $\cT_\hol$ is a homological epimorphism.
As a consequence, $\cT_\hol$ is quasi-free.
\end{corollary}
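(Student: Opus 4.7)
The plan is to follow the proof of the previous corollary \emph{mutatis mutandis}, with the K\"othe set adapted to $\cT_\hol$. By Proposition~\ref{prop:T_hol_PQ}, we have $\cT_\hol\cong \cT_{P,P}$ where $P=\{p^{(k)}\}_{k\in\N}$ with $p^{(k)}_n=k^n$. Theorem~\ref{thm:main} then reduces the entire claim to verifying the convolution domination condition $P*P\prec P$; the homological epimorphism property and the quasi-freeness both come for free once that check is done.

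The required estimate is a direct computation. For each $k\in\N$ and $n\in\Z_+$,
\[
(p^{(k)}*p^{(k)})_n=\sum_{i+j=n} k^i k^j=(n+1)\,k^n.
\]
Taking $m=2k$ and using $n+1\le 2^n$ for all $n\in\Z_+$ (immediate by induction, with equality at $n=0$ and $n=1$), we get $(p^{(k)}*p^{(k)})_n\le (2k)^n=p^{(2k)}_n$, so $p^{(k)}*p^{(k)}$ is dominated by $p^{(2k)}$ with constant $C=1$. For mixed products $p^{(k)}*p^{(\ell)}$ with $k\le\ell$, monotonicity gives $p^{(k)}\le p^{(\ell)}$ coordinatewise, hence $p^{(k)}*p^{(\ell)}\le p^{(\ell)}*p^{(\ell)}\le p^{(2\ell)}$. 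This establishes $P*P\prec P$, and Theorem~\ref{thm:main} then yields both assertions.

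I do not foresee any real obstacle: the substantive content has already been packaged into Theorem~\ref{thm:main} (which in turn rests on Theorem~\ref{thm:Omega_Talg} and Theorem~\ref{thm:qfree_homepi}), and the only thing left is the elementary observation that the exponential weights $k^n$ are closed under convolution up to a polynomial factor $n+1$, which is easily absorbed by doubling the base $k\mapsto 2k$. This is strictly parallel to the $\cT_\smth$ case, where the polynomial weights $(1+n)^k$ pick up an extra factor of $1+n$ under convolution and one passes from $k$ to $2k+1$.
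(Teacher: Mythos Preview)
Your proof is correct and takes essentially the same approach as the paper's: identify $\cT_\hol$ with $\cT_{P,P}$ via Proposition~\ref{prop:T_hol_PQ}, verify $P*P\prec P$ through the estimate $(p^{(k)}*p^{(k)})_n=(n+1)k^n\le (2k)^n=p^{(2k)}_n$, and invoke Theorem~\ref{thm:main}. Your explicit handling of the mixed convolutions $p^{(k)}*p^{(\ell)}$ is a small extra step the paper leaves implicit (it follows at once from the directedness axiom~(P2) for K\"othe sets).
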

\begin{proof}
By Proposition \ref{prop:T_hol_PQ}, we have $\cT_\hol=\cT_{P,P}$, where
$P=\{ p^{(k)}\}_{k\in\N}$ and $p^{(k)}_n=k^n$ for all $k\in\N$,
$n\in\Z_+$. Given $k\in\N$ and $n\in\Z_+$, we have
\[
(p^{(k)}*p^{(k)})_n
=\sum_{i+j=n} k^i k^j = (n+1)k^n \le 2^n k^n
=p^{(2k)}_n.
\]
Hence $P*P\prec P$, and Theorem~\ref{thm:main} implies the result.
\end{proof}

\end{document}